\setlist[enumerate]{nolistsep, label = (\alph*), ref=(\text{\alph*)}}
\setlist[itemize]{nolistsep}
\renewcommand{\Im}{\mathop{\text{Im}}}
\renewcommand{\phi}{\varphi}
\renewcommand{\ge}{\geqslant}
\renewcommand{\le}{\leqslant}
\newcommand{\KK}{\mathbb{K}}
\newcommand{\LL}{\mathbb{L}}
\renewcommand{\AA}{\mathbb{A}}
\newcommand{\QQ}{\mathbb{Q}}
\newcommand{\ZZ}{\mathbb{Z}}
\newcommand{\GG}{\mathbb{G}}
\newcommand{\g}{\mathcal{g}}
\newcommand{\R}{R(\g)}
\newcommand{\X}{X(\g)}
\newcommand{\NN}{\mathbb{Z}_{>0}}
\newcommand{\Zgezero}{\mathbb{Z}_{\geqslant 0}}
\newcommand{\divid}{\mid}
\newcommand{\rep}{\widehat}
\newcommand{\rrep}{\widetilde}
\newcommand{\pa}{\partial}
\newcommand{\rpa}{\rep \partial}
\newcommand{\dCb}{\delta_{C, \beta}}
\newcommand{\lici}{{l_{ic_i}}}
\newcommand{\pr}{^\prime}
\newcommand*{\hm}[1]{#1\nobreak\discretionary{}{\hbox{$\mathsurround=0pt #1$}}{}}
\DeclareMathOperator{\Ker}{Ker}
\DeclareMathOperator{\rdeg}{\deg^\prime}
\theoremstyle{plain}
\newtheorem{lemma}{Lemma}
\newtheorem{proposition}{Proposition}
\newtheorem{theorem}{Theorem}
\newtheorem{corollary}{Corollary}
\theoremstyle{definition}
\newtheorem{definition}{Definition}
\newtheorem{construction}{Construction}
\newtheorem{example}{Example}
\theoremstyle{remark}
\newtheorem{remark}{Remark}
\begin{document}

\title[LOCALLY NILPOTENT DERIVATIONS OF TRINOMIAL ALGEBRAS]{On homogeneous locally nilpotent derivations of trinomial algebras}
\author{Sergey Gaifullin}
\author{Yulia Zaitseva}
\date{}
\address{Lomonosov Moscow State University, Faculty of Mechanics and Mathematics, Department of Higher Algebra, Leninskie Gory 1, Moscow, 119991 Russia; \linebreak and \linebreak
National Research University Higher School of Economics, Faculty of Computer Science, Kochnovskiy Proezd 3, Moscow, 125319 Russia}
\email{sgayf@yandex.ru}
\address{Lomonosov Moscow State University, Faculty of Mechanics and Mathematics, Department of Higher Algebra, Leninskie Gory 1, Moscow, 119991 Russia;}
\email{yuliazaitseva@gmail.ru}

\subjclass[2010]{Primary 13N15, 14R20; \ Secondary 13A50, 14J50}
\thanks{The work was supported by the Foundation for the Advancement of Theoretical Physics and Mathematics ``BASIS''}
\keywords{Affine hypersurface, torus action, graded algebra, derivation}

\begin{abstract}
\noindent We provide an explicit description of homogeneous locally nilpotent derivations of the algebra of regular functions on affine trinomial hypersurfaces. As an application, we describe the set of roots of trinomial algebras. 
\end{abstract}

\maketitle

\section{Introduction}

Let $\KK$ be an algebraically closed field of characteristic zero and $R$ be an algebra over~$\KK$. 
A $\KK$-linear map $\delta\colon R \to R$ is called a derivation of the algebra $R$ if it satisfies the Leibniz rule, that is $\delta(fg) = \delta(f)g + f \delta(g)$ for any $f, g \in R$. A derivation $\delta$ is said to be locally nilpotent if for any $f \in R$ there exists $k \in \Zgezero$ such that $\delta^k(f) = 0$. 

Let $K$ be an abelian group. An algebra $R$ is said to be $K$-graded if 
$$
R = \bigoplus \limits_{w \in K} R_w
$$
and $R_{w_1}R_{w_2} \subseteq R_{w_1+w_2}$ for any $w_1, w_2 \in K$. 
Elements of $R_w$ are called homogeneous of degree~$w \in K$ and $R_w$ is called the homogeneous component of $R$ of degree~$w$. 
The weight monoid is the set $S = \{w \in K \mid R_w \ne 0\}$, the weight cone is the convex cone in the rational vector space $K_\QQ = K \otimes_\ZZ \QQ$ generated by~$S$. 
Every element of $R$ has a decomposition into the sum of homogeneous elements, which are called its homogeneous components. 
An ideal $I$ in $R$ is said to be homogeneous if the homogeneous components of any element of $I$ belong to $I$ as well. 
In particular, any principal ideal generated by a homogeneous element is homogeneous. 
If $I$ is a homogeneous ideal in $R$, then $R/I$ inherits the grading from~$R$. 

A derivation $\delta$ of $R$ is called homogeneous if it maps homogeneous elements of $R$ to homogeneous ones. 
In this case, there exists an element $w_0 \in K$ such that $\delta(R_w) \subseteq R_{w+w_0}$ for any $w \in K$. 
The element $w_0$ is called the degree of $\delta$. 

These notions have a geometric interpretation. 
Denote by $X$ an irreducible algebraic variety over~$\KK$ and by $\GG_a = (\KK, +)$ the additive group of the ground field. 
It is known that locally nilpotent derivations of the algebra $\KK[X]$ of regular functions on~$X$ are in one-to-one correspondence with regular $\GG_a$-actions on~$X$, see e.g.~\cite[Section 1.5]{Fr2006}.
Let $H$ be a quasitorus, that is a direct product of a torus and a finite abelian group. 
Suppose that $H$ acts on the variety~$X$. 
Then we have a corresponding grading on the algebra~$\KK[X]$ by the group of characters~$K$ of the quasitorus~$H$: 
$$
\KK[X] = \bigoplus \limits_{w \in K} \KK[X]_w, \quad \text{where}\quad \KK[X]_w = \{f \mid h \cdot f = w(h)f \quad\forall h \in H\}.
$$
It can be proved that a locally nilpotent derivation of~$\KK[X]$ is homogeneous with respect to this grading 
if and only if the quasitorus~$H$ normalizes the corresponding $\GG_a$-action on~$X$. 
A description of homogeneous locally nilpotent derivations enables us to describe the automorphism group of an algebraic variety in some cases, see e.g.~\cite[Theorem~5.5]{ArHaHeLi2012}, \cite[Theorem~5.5]{ArGa2017}. 

Let us define a trinomial algebra. 
Fix $n_0, n_1, n_2 \in \NN$ and denote $n = n_0 + n_1 + n_2$. 
For each $i = 0, 1, 2$, fix a tuple $l_i = (l_{i1}, \ldots, l_{in_i}) \in \NN^{n_i}$ and define a monomial 
$$
T_i^{l_i} = T_{i1}^{l_{i1}} \ldots T_{in_i}^{l_{in_i}} 
\in \KK[T_{ij} \mid i = 0, 1, 2, \; j = 1, \ldots, n_i]. 
$$
A polynomial of the form $\g = T_0^{l_0} + T_1^{l_1} + T_2^{l_2} \in \KK[T_{ij}]$ is called a trinomial, the hypersurface $\X$ given by $\{\g = 0\}$ in the affine space $\AA^n$ is a trinomial hypersurface, and the algebra $\R := \KK[T_{ij}]\,/\,(\g)$ of regular functions on $\X$ is a trinomial algebra. 

Our motivation to study trinomials comes from toric geometry. 
Consider an effective action ${T \times X \to X}$ of an algebraic torus~$T$ on an irreducible variety~$X$. 
The complexity of such an action is the codimension of a general $T$-orbit in $X$ and equals $\dim X - \dim T$. 

Actions of complexity zero are actions with an open $T$-orbit. 
A normal variety admitting a torus action with an open orbit is called a toric variety. 
Let $X$ be a toric variety. 
Then $\GG_a$-actions on~$X$ normalized by~$T$ (or, equivalently, locally nilpotent derivations of $\KK[X]$ that are homogeneous with respect to the corresponding grading) can be described in terms of Demazure roots of the fan corresponding to~$X$; see~\cite{De1970}, \cite[Section~3.4]{Od1988} for the original approach and~\cite{Li22010, ArLi2012, ArHaHeLi2012} for generalizations. 

Let $X$ admit a torus action of complexity one. 
A description of $\GG_a$-actions on~$X$ normalized by~$T$ (or homogeneous locally nilpotent derivations of $\KK[X]$) in terms of proper polyhedral divisors may be found in~\cite{Li12010} and~\cite{Li22010}. 
It is an interesting problem to find their explicit form in concrete cases. 

The study of toric varieties is related to binomials, see e.g.~\cite[Chapter~4]{St1996}. 
At the same time, Cox rings establish a close relation between torus actions of complexity one and trinomials, see~\cite{HaSu2010, HaHeSu2011, HaHe2013, ArHaHeLi2012, HaWr2017}. 
In particular, any trinomial hypersurface admits a torus action of complexity one.

We consider the fine grading on a trinomial algebra~$\R$, it is defined in Construction~\ref{gr_constr}. 
This grading corresponds to an effective action of a quasitorus on the trinomial hypersurface~$\X$. 
The neutral component of the quasitorus is a torus of dimension~$n - 2$, which acts on~$\X$ with complexity one. 

In this paper, homogeneous locally nilpotent derivations of a trinomial algebra~$\R$ are described. 
It is proved that they are elementary (Theorem~\ref{theor}), see Section~\ref{sect_dCb} for definitions. 
In~\cite[Theorem~4.3]{ArHaHeLi2012}\footnote{In~\cite{ArHaHeLi2012}, a more general class of affine varieties defined by a system of trinomials is studied, see~\cite[Construction~3.1]{ArHaHeLi2012} for details.}, 
this statement was proved for primitive derivations, i.e. for homogeneous derivations with the degrees not belonging to the weight cone of~$\R$. 
Every primitive derivation is locally nilpotent; the converse is false (see~\cite[Example~7]{Za2017}). 
In~\cite{Za2017}, Theorem~\ref{theor} was proved for some classes of trinomial algebras~$\R$, including non-factorial algebras~$\R$.  
In this paper, we use this result and obtain a description in a general case. 
Specifically, we reduce the remaining case to a description of homogeneous locally nilpotent derivations for $X = \{x+y+z^k = 0\}$, see Proposition~\ref{pr_23m} and Lemma~\ref{l_base}. 

The degrees of homogeneous locally nilpotent derivations are called the roots of the variety $X$. This definition imitates in some sense the notion of a root from Lie Theory. 

In~\cite{Li2011}, A.\,Liendo gives a description of roots of the affine Cremona group. This answers a question due to Popov, see~\cite{Po2005}. The proof is based on the classification of locally nilpotent derivations in the case of torus actions of complexity one given in~\cite{Li22010}. In~\cite{Ko22014}, this result is obtained more directly. 

Theorem~\ref{theor} enables us to find roots of~$\X$, see Section~\ref{sect_cor}. 
Besides that, Theorem~\ref{theor} gives a description of locally nilpotent derivations and roots in some important particular cases, see Examples~\ref{ex_dCb}, \ref{ex_pyu1}, and~\ref{ex_pyu2}. 

\smallskip

The authors are grateful to Ivan Arzhantsev for useful discussions and comments and to the referee for careful reading of the paper and valuable suggestions.

\section{Trinomial algebras}

In this section, the fine grading $\deg$ on a trinomial algebra is defined and some properties of homogeneous elements are proved. 

\begin{construction}
\label{gr_constr}
Fix $n_0, n_1, n_2 \in \NN$ and let $n = n_0 + n_1 + n_2$. 
Consider the polynomial algebra $\KK[T_{ij},\, i = 0, 1, 2,\, j = 1, \ldots, n_i] =: \KK[T_{ij}]$. 
For each $i = 0, 1, 2$, fix a tuple  
$l_i \hm= (l_{i1}, \ldots, l_{in_i}) \in \NN^{n_i}$ and define a monomial 
$T_i^{l_i} = T_{i1}^{l_{i1}} \ldots T_{in_i}^{l_{in_i}} \in \KK[T_{ij}]$.  
By a \textit{trinomial} we mean a polynomial of the form 
$$
\g = T_0^{l_0} + T_1^{l_1} + T_2^{l_2}.
$$
A \textit{trinomial hypersurface} is the zero set $\X = \{\g = 0\}$ in the affine space $\AA^n$. 
It can be checked that the polynomial $\g$ is irreducible, hence the 
algebra $\R := \KK[T_{ij}]\,/\,(\g)$ of regular functions on the trinomial hypersurface has no zero divisors. 
We call such algebras $\R$ \textit{trinomial}.  
We~use the same notation for elements of $\KK[T_{ij}]$ and their projections to~$\R$.

Following~\cite{HaHe2013}, we consider a ($2 \times n$)-matrix $L$ corresponding to the trinomial $\g$ as follows: 
$$L = 
\begin{pmatrix}
-l_0 & l_1 & 0 \\
-l_0 & 0 & l_2 
\end{pmatrix}.
$$
Let $L^*$ be the transpose of~$L$. 
Denote by $K$ the factor group $K = \ZZ^n\,/\,\Im L^*$ and by $Q \colon \ZZ^n \to K$ the canonical projection. 
Let $e_{ij} \in \ZZ^n$ be the canonical basis vectors. 
The equalities 
\begin{equation}
\label{def_deg}
\deg T_{ij} = Q(e_{ij})
\end{equation}
define a $K$-grading on the algebra~$\KK[T_{ij}]$.

Note that the sums $l_{i1} Q(e_{i1}) + \ldots + l_{in_i} Q(e_{in_i}) \in K$
are the same for $i = 0, 1, 2$. Hence $\g$ is a homogeneous polynomial and equalities~(\ref{def_deg}) define a $K$-grading on~$\R = \KK[T_{ij}]\,/\,(\g)$ as well. 
We call this grading \emph{the fine grading} and denote the degree with respect to this grading by $\deg$. The derivations that are homogeneous with respect to the fine grading are called \emph{finely homogeneous}. 
\end{construction}

\begin{example}
\label{ex_grad}
Let all $l_{ij} = 1$, that is 
$\g = T_{01}\ldots T_{0n_0} + T_{11}\ldots T_{1n_1} + T_{21} \ldots T_{2n_2}$. Then
$L = \begin{pmatrix}
-1 & \ldots & -1 & 1 & \ldots & 1 & 0 & \ldots & 0 \\ 
-1 & \ldots & -1 & 0 & \ldots & 0 & 1 & \ldots & 1 \end{pmatrix}$. 
The matrix $L$ can be reduced by elementary row and column operations to the form 
$\begin{pmatrix} 
1 & 0 & 0 & \ldots & 0 \\ 
0 & 1 & 0 & \ldots & 0 \end{pmatrix}$, 
hence the group $K = \ZZ^n\,/\,\Im L^*$ is isomorphic to $\ZZ^{n-2}$. Let us choose some basis $\left\{v_{ij}, \, (i, j) \notin \{(0, 1), (1, 1)\}\right\}$ of $\ZZ^{n-2}$. 
We can define the grading explicitly via
$$
\deg T_{01} = -\sum \limits_{k=2}^{n_0} v_{0k} + 
\sum \limits_{l=1}^{n_2} v_{2l}, \quad
\deg T_{11} = -\sum \limits_{k=2}^{n_1} v_{1k} + 
\sum \limits_{l=1}^{n_2} v_{2l}, \; \text{and }
\deg T_{ij}  = v_{ij} \text{\quad otherwise}.
$$
\end{example}

\smallskip

The following lemma states that the constructed grading $\deg$ is the finest grading on the algebra~$\R$ such that all generators~$T_{ij}$ are homogeneous. It follows easily from the definition of the grading $\deg$. 

\begin{lemma}
\label{l_rdeg}
Let $\deg$ be the fine grading on the algebra~$\R$ defined in Construction~\ref{gr_constr} and $\rdeg$ be any grading on~$\R$ by some abelian group such that all generators~$T_{ij}$ are homogeneous. 
Then $\rdeg = \psi \circ \deg$ for some homomorphism $\psi \colon K \to K^\prime$. 
In particular,
any derivation that is homogeneous with respect to the fine grading $\deg$ is homogeneous with respect to the grading $\rdeg$ as well. 
\end{lemma}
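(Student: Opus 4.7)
The plan is to exploit the universal property implicit in the construction of the fine grading: the group $K$ is built as $\ZZ^n/\Im L^*$ precisely to make the trinomial $\g$ homogeneous, and among all gradings for which the $T_{ij}$ are homogeneous, this construction is the initial one.

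First, given any grading $\rdeg$ of $\R$ by an abelian group $K\pr$ such that every $T_{ij}$ is $\rdeg$-homogeneous, I would define a homomorphism $\phi \colon \ZZ^n \to K\pr$ on the canonical basis by $\phi(e_{ij}) := \rdeg T_{ij}$. Since $\R = \KK[T_{ij}]/(\g)$ is a $\rdeg$-graded quotient, the relation $\g = T_0^{l_0} + T_1^{l_1} + T_2^{l_2}$ must be $\rdeg$-homogeneous; this forces the three monomials to have equal $\rdeg$-degree, i.e.\
$$
\phi\bigl(\textstyle\sum_j l_{0j}e_{0j}\bigr) = \phi\bigl(\sum_j l_{1j}e_{1j}\bigr) = \phi\bigl(\sum_j l_{2j}e_{2j}\bigr).
$$
The two resulting identities say exactly that $\phi$ vanishes on the two rows of $L$, hence on $\Im L^*$. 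Therefore $\phi$ factors through $Q$: there exists a unique $\psi \colon K \to K\pr$ with $\phi = \psi \circ Q$. Evaluating on generators, $\rdeg T_{ij} = \phi(e_{ij}) = \psi(Q(e_{ij})) = \psi(\deg T_{ij})$, and since the $T_{ij}$ generate $\R$, this gives $\rdeg = \psi \circ \deg$ on every homogeneous element, proving the first assertion.

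For the second assertion, let $\delta$ be a derivation that is $\deg$-homogeneous, say of degree $w_0 \in K$. I would take an arbitrary $\rdeg$-homogeneous element $f \in \R$ of $\rdeg$-degree $w\pr$ and decompose it as $f = \sum_{w} f_w$ into its $\deg$-homogeneous components. Because $\rdeg = \psi \circ \deg$, all $w$ appearing in this decomposition satisfy $\psi(w) = w\pr$. Applying $\delta$ gives $\delta(f) = \sum_w \delta(f_w)$, where each $\delta(f_w)$ is $\deg$-homogeneous of degree $w + w_0$, and therefore $\rdeg$-homogeneous of degree $\psi(w+w_0) = w\pr + \psi(w_0)$. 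Hence $\delta(f)$ is $\rdeg$-homogeneous of degree $w\pr + \psi(w_0)$, so $\delta$ is $\rdeg$-homogeneous.

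There is essentially no hard step: the only thing that could trip one up is being careful that $\phi$ is well defined on $\R$ (rather than just on $\KK[T_{ij}]$), which is precisely why we used the relation $\g \equiv 0$ to see that $\phi$ kills $\Im L^*$. Everything else is bookkeeping on the refinement of graded decompositions.
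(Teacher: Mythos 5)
Your proof is correct and is exactly the argument the paper has in mind (the paper omits it, remarking only that the lemma ``follows easily from the definition''): the fine grading is universal among gradings making the $T_{ij}$ homogeneous because $K = \ZZ^n/\Im L^*$ is precisely the quotient of $\ZZ^n$ by the relations forced by homogeneity of~$\g$. The only point worth spelling out is why the three monomials must have equal $\rdeg$-degree: each $T_i^{l_i}$ is nonzero in the domain $\R$, so no proper sub-sum of the terms of $\g$ can vanish, and hence the relation $T_0^{l_0}+T_1^{l_1}+T_2^{l_2}=0$ cannot split into homogeneous components of distinct degrees.
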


For any algebra $R$ graded by an abelian group, one can consider the subalgebra of degree zero fractions inside the field of fractions:
$$
Q(R)_0 = \left\{\frac{f}{g} \biggm| f, g \in R, \, \text{homogeneous}, \, \deg f = \deg g, \, g \ne 0\right\} \subseteq Q(R). 
$$

The following lemma is proved in~\cite[Proposition 3.4]{ArHaHeLi2012}.
\begin{lemma}
\label{fracfield}
Take any $i, j \in \{0, 1, 2\}, \, i \ne j$. Then the field of degree zero fractions of $\R$ is the rational function field
$$
Q(\R)_0 = \KK\left(\frac{T_i^{l_i}}{T_j^{l_j}}\right).
$$
\end{lemma}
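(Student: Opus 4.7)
The plan is to set $s = T_1^{l_1}/T_0^{l_0}$ and $t = T_2^{l_2}/T_0^{l_0}$ in $Q(\R)$ and to prove $Q(\R)_0 = \KK(s)$; since $\g = 0$ forces $1 + s + t = 0$, the subfield $\KK(T_i^{l_i}/T_j^{l_j}) \subseteq Q(\R)$ coincides with $\KK(s)$ for every ordered pair $i \ne j$, so the special case implies the general one. The inclusion $\KK(s) \subseteq Q(\R)_0$ is immediate: since $\g$ is homogeneous in $\R$, its three summands share a common degree, which places $s$ and $t$ in $Q(\R)_0$.

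For the reverse inclusion I would first identify $\KK(T_{ij})_0$, the degree-zero part of the ambient rational field. A Laurent monomial $\prod T_{ij}^{a_{ij}}$ has degree zero iff $(a_{ij}) \in \Im L^*$, and direct inspection of the two columns of $L^*$ shows that $\Im L^*$ is freely generated over $\ZZ$ by the exponent vectors of $s$ and $t$. Hence $\KK[T_{ij}^{\pm 1}]_0 = \KK[s^{\pm 1}, t^{\pm 1}]$, and a routine factorization (pass any homogeneous ratio through a fixed monomial of its degree) upgrades this to $\KK(T_{ij})_0 = \KK(s, t)$, purely transcendental of transcendence degree~$2$. Now take $u = f/g \in Q(\R)_0$, lift $f, g$ to homogeneous $\tilde f, \tilde g \in \KK[T_{ij}]$ of the same degree, and use the preceding identification to write $\tilde f/\tilde g = A(s, t)/B(s, t)$ for some coprime $A, B \in \KK[x, y]$. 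Clearing denominators by a power of $T_0^{l_0}$ and projecting the resulting identity to $\R$, then dividing in $Q(\R)$ by $\bar T_0^{N l_0}$, yields $u = A(\bar s, \bar t)/B(\bar s, \bar t)$; substituting $\bar t = -1 - \bar s$ puts $u$ into $\KK(\bar s)$.

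The main obstacle is to justify that the denominator $B(\bar s, -1 - \bar s)$ does not vanish in $Q(\R)$. Two ingredients are required. First, $B(x, -1 - x) \in \KK[x]$ must be nonzero, which follows from coprimality: $\gcd(A, B) = 1$ prevents $B$ from being divisible by the irreducible polynomial $1 + x + y$. Second, $\bar s$ must be transcendental over $\KK$ in $Q(\R)$. Suppose instead $\sum c_k \bar s^k = 0$ with not all $c_k \in \KK$ zero; multiplying by a sufficiently high power of $\bar T_0^{l_0}$ produces a nonzero element of the ideal $(\g) \subseteq \KK[T_{ij}]$ that involves only the variables $T_{0j}$ and $T_{1j}$. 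But for any nonzero $h \in \KK[T_{ij}]$ the product $h \g$ contains the summand $h \cdot T_2^{l_2}$, which involves some $T_{2j}$ with positive exponent and cannot be cancelled by $h \cdot T_0^{l_0} + h \cdot T_1^{l_1}$. This contradiction forces all $c_k = 0$, so $\bar s$ is transcendental and the argument closes.
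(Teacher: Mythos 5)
The paper does not actually prove this lemma --- it is quoted from \cite[Proposition~3.4]{ArHaHeLi2012} --- so your argument is necessarily an independent one. Its overall architecture is sound: identifying $\KK(T_{ij})_0 = \KK(s,t)$ from the fact that $\Im L^*$ is freely generated by the exponent vectors of $s$ and $t$, pushing a degree-zero fraction of $\R$ through this identification, and then collapsing to one variable via $1+\bar s+\bar t = 0$ is a correct route; your transcendence argument for $\bar s$ (a monomial of $h$ of maximal $T_2$-degree survives in $h\g$ with strictly larger $T_2$-degree, so $h\g$ always involves some $T_{2j}$) is complete and correct, as is the reduction of the general pair $(i,j)$ to the single generator $s$.

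There is, however, one genuine flaw in the nonvanishing step: the claim that ``$\gcd(A,B)=1$ prevents $B$ from being divisible by the irreducible polynomial $1+x+y$'' is a non sequitur. Coprimality forbids a \emph{common} irreducible factor of $A$ and $B$; it does not forbid $1+x+y$ from dividing $B$ alone (e.g.\ $A=x$ and $B=1+x+y$ are coprime). The danger is real: $1+s+t = \g/T_0^{l_0}$ is a \emph{nonzero} element of the ambient field $\KK(T_{ij})$, so a reduced fraction $A/B$ representing $\tilde f/\tilde g$ there can perfectly well carry $1+x+y$ in its denominator (take $\tilde g = \g$); it is precisely the hypothesis $g\ne 0$ in $\R$, i.e.\ $\g\nmid\tilde g$, that must exclude this, and your justification as written never invokes it. The repair is short and uses only ingredients you already have: project the cleared identity $\tilde f\,\tilde B = \tilde g\,\tilde A$ to $\R$; if $B(\bar s,\bar t)=0$, then $g\cdot\overline{\tilde A}=0$, hence $\overline{\tilde A}=0$ because $\R$ is a domain and $g\ne 0$, hence $A(\bar s,-1-\bar s)=0$; by the transcendence of $\bar s$ this forces $(1+x+y)\mid A$ in addition to $(1+x+y)\mid B$, and only at this point does coprimality yield a contradiction. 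With that substitution the proof closes.
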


\smallskip

For any $w \in K$, denote by $R_w$ the finely homogeneous component of $\R$ of degree $w$. 

\begin{lemma}
\label{lem_hom1}
Let $h \in \R$ be homogeneous. Then $h = \rrep h F(T_0^{l_0}, T_1^{l_1})$ for some homogeneous polynomial $F$ in two variables and some $\rrep h \in R_{\rrep w}$ with $\dim R_{\rrep w} = 1$.  
\end{lemma}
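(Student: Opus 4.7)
The plan is to reduce to a monomial-level calculation, exploiting the single defining relation $T_2^{l_2} = -T_0^{l_0} - T_1^{l_1}$ in $\R$ to extract the maximal power of each $T_i^{l_i}$ from every monomial summand of $h$.

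First I would lift $h$ to a $K$-homogeneous representative $\widehat h = \sum_\alpha c_\alpha m_\alpha \in \KK[T_{ij}]$ of the same degree $w$ (possible since the ideal $(\g)$ is $K$-homogeneous). For each monomial $m = \prod T_{ij}^{e_{ij}}$ in $\KK[T_{ij}]$, set $a_i(m) := \min_j \lfloor e_{ij}/l_{ij} \rfloor$ (the largest power of $T_i^{l_i}$ factorable from $m$) and define the reduced part $\rrep m := m / \prod_{i=0}^{2} (T_i^{l_i})^{a_i(m)}$, itself a monomial in $\KK[T_{ij}]$.

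The key step is to show that any two monomials of the same $K$-degree have identical reduced parts. Two monomials $m, m'$ share a $K$-degree iff their exponent vectors differ by an element of $\Im L^*$, giving $(e'_{ij}) - (e_{ij}) = \alpha(-l_0, l_1, 0) + \beta(-l_0, 0, l_2)$ for some $\alpha, \beta \in \ZZ$. The identity $\min_j \lfloor (e_{ij} + c \cdot l_{ij})/l_{ij} \rfloor = a_i(m) + c$ (valid for any integer $c$) then yields $a_0(m') = a_0(m) - (\alpha+\beta)$, $a_1(m') = a_1(m) + \alpha$, $a_2(m') = a_2(m) + \beta$; substituting into $e_{ij} - a_i(m) l_{ij}$ shows the shifts cancel, so the reduced exponents agree and $\rrep m = \rrep m'$.

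Since $T_2^{l_2} = -T_0^{l_0} - T_1^{l_1}$ in $\R$, each lifted monomial becomes $m = \rrep m \cdot F_m(T_0^{l_0}, T_1^{l_1})$ with $F_m(x,y) = x^{a_0(m)} y^{a_1(m)} (-x-y)^{a_2(m)}$, a polynomial homogeneous of total degree $a_0(m) + a_1(m) + a_2(m)$. The key step makes this total degree an invariant of the $K$-degree, so setting $\rrep h$ to be the common reduced monomial and $F := \sum_\alpha c_\alpha F_{m_\alpha}$ gives $h = \rrep h \cdot F(T_0^{l_0}, T_1^{l_1})$ with $F$ homogeneous in two variables. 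For $\rrep w := \deg \rrep h$, any monomial $m'$ of $K$-degree $\rrep w$ must satisfy $\rrep{m'} = \rrep h$ (by the key step) and hence $m' = \rrep h \cdot \prod (T_i^{l_i})^{a_i}$ with $\bigl(\sum a_i\bigr) \mu = 0$ in $K$, where $\mu := \deg T_0^{l_0}$; since $\mu$ is nontorsion in $K$ (from $\alpha l_1 = \beta l_2 = 0$ in $\Im L^*$ forcing $\alpha = \beta = 0$, using positivity of the $l_{ij}$), all $a_i = 0$ and $m' = \rrep h$. As $(\g)$ contains no monomial (by a Newton-polytope vertex argument, since $\g$ has three monomial summands on disjoint variable sets), $\rrep h$ is a nonzero class in $\R$ and $\dim R_{\rrep w} = 1$.

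The main obstacle is the key step, which rests on the identity $\min_j \lfloor (e_{ij} + c \cdot l_{ij})/l_{ij} \rfloor = a_i(m) + c$; once this is recorded, the rest is bookkeeping.
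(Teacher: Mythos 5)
Your proof is correct. For this lemma the paper offers no argument of its own --- it only points to the proof of Proposition~3.5 in~\cite{ArHaHeLi2012} --- so your write-up is a self-contained replacement for that citation, and it runs along the same lines as the cited result: two monomials of $\KK[T_{ij}]$ have equal fine degree exactly when their exponent vectors differ by an integer combination of $(-l_0,l_1,0)$ and $(-l_0,0,l_2)$, such a shift changes $a_i(m)=\min_j\lfloor e_{ij}/l_{ij}\rfloor$ by the corresponding integers while leaving the reduced part fixed, and the discarded factor $\prod_i (T_i^{l_i})^{a_i}$ becomes a homogeneous polynomial in $T_0^{l_0},T_1^{l_1}$ after substituting $T_2^{l_2}=-T_0^{l_0}-T_1^{l_1}$. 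The two supplementary facts you invoke for $\dim R_{\rrep w}=1$ --- that $\deg T_0^{l_0}$ is non-torsion in $K$ (so the reduced monomial is the unique monomial of its degree in $\KK[T_{ij}]$) and that $(\g)$ contains no monomial (so it survives in $\R$) --- are both genuinely needed and both correctly justified; for the latter you do not even need a Newton-polytope argument, since a monomial in $(\g)$ would make the irreducible non-monomial $\g$ a divisor of a monomial, whereas every divisor of a monomial is a scalar multiple of a monomial.
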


This follows from the proof of~\cite[Proposition 3.5]{ArHaHeLi2012}. 

\begin{lemma}
\label{lem_hom2}
Let $h \in \R$ be homogeneous. Then $h = \prod T_{ij}^{u_{ij}} \cdot F(T_0^{l_0}, T_1^{l_1})$ for some homogeneous polynomial $F$ in two variables and some $u_{ij} \in \Zgezero$. 
\end{lemma}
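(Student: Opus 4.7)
The plan is to invoke Lemma~\ref{lem_hom1} and then upgrade its factor $\rrep h$ to a monomial. By that lemma, $h = \rrep h \cdot F(T_0^{l_0}, T_1^{l_1})$ with $\rrep h$ in a one-dimensional homogeneous component $R_{\rrep w}$ (the case $h=0$ is trivial). So it suffices to show that any such one-dimensional component of $\R$ is spanned over~$\KK$ by the class of a monomial in the $T_{ij}$; then $\rrep h$ is itself a scalar multiple of such a monomial, and absorbing the scalar into $F$ produces the desired factorisation.

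To realise $R_{\rrep w}$ as being spanned by a monomial, I would proceed by lifting: choose a homogeneous polynomial $p \in \KK[T_{ij}]$ of degree $\rrep w$ with $p \equiv \rrep h \pmod{(\g)}$. Since the grading on $\KK[T_{ij}]$ is given by $\deg T_{ij} = Q(e_{ij})$, the element $p$ is a $\KK$-linear combination of monomials $T^\alpha = \prod T_{ij}^{\alpha_{ij}}$ with $\alpha \in \Zgezero^n$ and $Q(\alpha) = \rrep w$. Picking any single monomial $T^\alpha$ appearing with nonzero coefficient in~$p$ supplies a candidate element of $R_{\rrep w}$ that already has the desired monomial shape.

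The only non-routine point, and the step I expect to be the main obstacle, is checking that this candidate $T^\alpha$ survives the passage to the quotient $\R = \KK[T_{ij}]/(\g)$. This should follow from the fact that $\g = T_0^{l_0} + T_1^{l_1} + T_2^{l_2}$ is irreducible and consists of three distinct terms, so $\g$ cannot divide a monomial; consequently $T^\alpha \notin (\g)$, i.e., $T^\alpha$ is a nonzero element of $R_{\rrep w}$. Combined with $\dim R_{\rrep w} = 1$, this forces $\rrep h = c\, T^\alpha$ for some $c \in \KK$, $c \ne 0$, and setting $u_{ij} = \alpha_{ij}$ while folding $c$ into $F$ yields the required expression $h = \prod T_{ij}^{u_{ij}} \cdot F(T_0^{l_0}, T_1^{l_1})$.
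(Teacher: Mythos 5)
Your proof is correct and follows essentially the same route as the paper's: both reduce to Lemma~\ref{lem_hom1}, expand a homogeneous representative of $\rrep h$ into monomials, and use $\dim R_{\rrep w}=1$ to conclude that $\rrep h$ is a scalar multiple of a single monomial. Your explicit check that a monomial cannot lie in $(\g)$ (so it is nonzero in $\R$) is a point the paper leaves implicit, but it is the same argument.
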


\begin{proof}
Let $\rrep h$ from Lemma~\ref{lem_hom1} be equal to 
$\rrep h = \sum \limits_{k = 1}^s h_k$, where $h_k \ne 0$ are non-proportional monomials in $T_{ij}$ and $\deg h_k = \deg \rrep h = \rrep w$. Then $\dim R_{\rrep w} = 1$ implies $\rrep h = h_1$. 
\end{proof}

\section{Elementary derivations}
\label{sect_dCb}

The following construction is given in~\cite{ArHaHeLi2012} and is described below in the case of trinomial hypersurfaces 
(in notation of~\cite{ArHaHeLi2012} that is $r = 2$, 
$A = \left(\begin{smallmatrix} 0 & -1 & 1 \\ 1 & -1 & 0 \end{smallmatrix}\right)$, 
$\g = g_I$ for $I = \{0, 1, 2\}$, $\R = R(A, P_0)$).
\begin{construction}
\label{dCb_constr} 
Let us define a derivation $\dCb$ of~$\R$. 
The input data are
\begin{itemize}
\item a sequence $C = (c_0, c_1, c_2)$, where $c_i \in \ZZ$, $1 \le c_i \le n_i$; 
\item a vector $\beta = (\beta_0, \beta_1, \beta_2)$ such that $\beta_i \in \KK$, $\beta_0 + \beta_1 + \beta_2 = 0$. 
\end{itemize}
It is clear that for $\beta \ne 0$ as above either all entries $\beta_i$ differ from zero or there is a unique~$i_0$ with $\beta_{i_0} = 0$. 
According to these cases, we put further conditions and define: 
\begin{enumerate}[label = (\roman*), ref=(\roman*)]
\item \label{dCbconstr_1}
if $\beta_i \ne 0$ for all $i = 0, 1, 2$ and there is at most one $i_1$ with $l_{i_1c_{i_1}} > 1$, then we set 
$$
\dCb(T_{ij}) = 
\begin{cases}
\beta_i \prod \limits_{k \ne i} \frac{\pa T_k^{l_k}}{\pa T_{kc_k}},    &j = c_i,\\
0, &j \ne c_i.
\end{cases}
$$ 
\item \label{dCbconstr_2}
if $\beta_{i_0} = 0$ for a unique $i_0$ and there is at most one $i_1$ with $i_1 \ne i_0$ and $l_{i_1c_{i_1}} > 1$, then 
$$
\dCb(T_{ij}) = 
\begin{cases}
\beta_i \prod \limits_{k \ne i, i_0} \frac{\pa T_k^{l_k}}{\pa T_{kc_k}},    &j = c_i,\\
0, &j \ne c_i.
\end{cases}
$$
\end{enumerate}
\end{construction}

These assignments define a map~$\dCb$ on the generators~$T_{ij}$.
It can be extended uniquely to a~derivation on~$\KK[T_{ij}]$ by Leibniz rule. 
One can check that $\dCb(\g) = 0$, whence constructed map induces a well-defined derivation of the factor algebra~$\R$.

\begin{lemma}
Every derivation of the form~$\dCb$ of the algebra~$\R$ is homogeneous and locally nilpotent. 
\end{lemma}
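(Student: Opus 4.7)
The plan is to handle homogeneity and local nilpotence separately, reducing each assertion to the action of $\dCb$ on the generators $T_{ij}$.

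For homogeneity, the key ingredient is that the trinomial $\g$ is $\deg$-homogeneous, so the monomials $T_0^{l_0}, T_1^{l_1}, T_2^{l_2}$ share a common degree $w_g$, whence $\deg(\pa T_k^{l_k}/\pa T_{k c_k}) = w_g - \deg T_{k c_k}$. Substituting into the formulas of Construction~\ref{dCb_constr}, a direct calculation shows that the shift $\deg \dCb(T_{i c_i}) - \deg T_{i c_i}$ equals $2 w_g - \sum_{k=0}^{2} \deg T_{k c_k}$ in case~\ref{dCbconstr_1}, and $w_g - \deg T_{i_1 c_{i_1}} - \deg T_{i_2 c_{i_2}}$ in case~\ref{dCbconstr_2} (where $i_1, i_2$ denote the two indices different from $i_0$). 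Both are independent of $i$, so $\dCb$ has a well-defined degree on $\KK[T_{ij}]$. Since $(\g)$ is a homogeneous ideal and $\dCb(\g) = 0$ (noted in Construction~\ref{dCb_constr} and verified from $\beta_0 + \beta_1 + \beta_2 = 0$), this descends to a homogeneous derivation of~$\R$.

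For local nilpotence I would first show that $\dCb^N(T_{ij}) = 0$ for some $N = N(i, j)$. The variables with $j \ne c_i$ are directly in $\ker \dCb$. For the remaining generators $T_{i c_i}$, the structural observation is that $\dCb(T_{i c_i})$ is a monomial in the variables $T_{k j}$ with $k \ne i$, and the only generators of the form $T_{k c_k}$ that can appear in it are those with $l_{k c_k} > 1$. By the hypothesis of Construction~\ref{dCb_constr} there is at most one such index $i_1$. Hence $\dCb(T_{i_1 c_{i_1}})$ involves no $T_{k c_k}$ at all and so lies in $\ker \dCb$, giving $\dCb^2(T_{i_1 c_{i_1}}) = 0$. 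For $i \ne i_1$, $\dCb(T_{i c_i})$ factors as $T_{i_1 c_{i_1}}^{l_{i_1 c_{i_1}} - 1}$ times a kernel element, so repeated application of Leibniz drops the exponent of $T_{i_1 c_{i_1}}$ by one each step, giving $\dCb^{l_{i_1 c_{i_1}} + 1}(T_{i c_i}) = 0$. (If no such $i_1$ exists, $\dCb^2$ already kills every generator; case~\ref{dCbconstr_2} is handled in the same way after discarding the index $i_0$.) Extending from generators to all of $\R$ is then the standard multinomial argument: $\dCb^m$ applied to a monomial in the $T_{ij}$ expands into a sum of terms each containing a factor $\dCb^{b_{ij}}(T_{ij})$ with $\sum b_{ij} = m$, so for $m$ large enough every such term vanishes.

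The main obstacle is the structural analysis in the second paragraph: tracking which generators $T_{k c_k}$ can appear in each $\dCb(T_{i c_i})$, and using the \textquotedblleft at most one $i_1$\textquotedblright\ hypothesis of Construction~\ref{dCb_constr} to rule out a cycle among the variables $T_{i c_i}$ that would prevent $\dCb$ from being locally nilpotent. The rest is bookkeeping.
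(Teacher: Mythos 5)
Your proposal is correct and follows the standard route: the paper itself does not prove this lemma but refers to \cite[Construction~4.2]{ArHaHeLi2012}, and the argument there is essentially yours --- homogeneity from the common degree of the $T_i^{l_i}$, and local nilpotence by observing that the only non-kernel variable that can occur in any $\dCb(T_{ic_i})$ is the single $T_{i_1c_{i_1}}$ with $l_{i_1c_{i_1}}>1$, whose image already lies in $\Ker\dCb$. Your bookkeeping of the exponent drop and the reduction to generators is accurate, so there is nothing to add.
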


The proof is given in~\cite[Construction~4.2]{ArHaHeLi2012}.

\smallskip

Let $h \in \R$ be a homogeneous element in the kernel of a derivation $\dCb$. 
The derivation~$h \dCb$ is also locally nilpotent. 

\begin{definition}
We say that a derivation of a trinomial algebra~$\R$ is \textit{elementary} if it has the form~$h \dCb$, where $h$ is a homogeneous element in the kernel of~$\dCb$. 
In addition, \textit{elementary derivations of Type~I} are elementary derivations with $\dCb$ corresponding to case~\ref{dCbconstr_1} in Construction~\ref{dCb_constr}; \textit{elementary derivations of Type~II} are elementary derivations with $\dCb$ corresponding to case~\ref{dCbconstr_2}. 
\end{definition}

\begin{proposition}
\label{prop_ker}
Let $\dCb$ be finely homogeneous locally nilpotent derivation of $\R$ defined in Construction~\ref{dCb_constr}. Then $h$ is a homogeneous element of $\Ker \dCb$ if and only if   
$$
h = \alpha \!\!\!\! \prod \limits_{T_{ij} \in \Ker \dCb} \!\!\!\!\!\!\! T_{ij}^{u_{ij}} \cdot \left(\beta_1 T_0^{l_0} - \beta_0 T_1^{l_1}\right)^m,
$$
for some $u_{ij}, m \in \Zgezero$, $\alpha \in \KK$.
\end{proposition}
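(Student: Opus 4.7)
The plan is to prove both directions of the stated equivalence. The ``if'' direction follows directly from the Leibniz rule: each factor $T_{ij}$ with $T_{ij} \in \Ker\dCb$ is annihilated, so it remains only to verify that $\dCb\bigl((\beta_1 T_0^{l_0} - \beta_0 T_1^{l_1})^m\bigr) = 0$. Using the identity $\dCb(T_i^{l_i}) = \beta_i \prod_k \pa T_k^{l_k}/\pa T_{kc_k}$ (with the $i_0$-th factor omitted in case~\ref{dCbconstr_2}) from Construction~\ref{dCb_constr}, a one-line computation yields the vanishing.

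For the ``only if'' direction, I first invoke Lemma~\ref{lem_hom2} to write $h = \prod T_{ij}^{u_{ij}} \cdot F(T_0^{l_0}, T_1^{l_1})$, and then split the product as $h_0 h_1$, where $h_0$ collects the factors lying in $\Ker\dCb$ (all $T_{ij}$ with $j \ne c_i$, plus $T_{i_0 c_{i_0}}$ in case~\ref{dCbconstr_2}) and $h_1$ is a monomial in the remaining $T_{ic_i}$. Since $\R$ is a domain and $\dCb(h_0) = 0$, the equation $\dCb(h) = 0$ reduces to $\dCb(h_1 F) = 0$.

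I then expand $\dCb(h_1 F)$ via Leibniz and chain rules, using the identity $T_{ic_i}\cdot \pa T_i^{l_i}/\pa T_{ic_i} = l_{ic_i} T_i^{l_i}$. Setting $x := T_0^{l_0}$ and $y := T_1^{l_1}$ (so $T_2^{l_2} = -x-y$) and dividing by the common factor $\prod_k \pa T_k^{l_k}/\pa T_{kc_k}$ in the field of fractions of $\R$, the vanishing of $\dCb(h_1 F)$ becomes a rational-function identity equating the combination $\beta_0 F_x/F + \beta_1 F_y/F$ to a sum of simple poles at $x=0$, $y=0$, $x+y=0$ whose residues are explicit expressions in the $u_{ic_i}$, $l_{ic_i}$, and $\beta_i$.

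The final step is a partial-fraction / residue argument. Over $\KK$, $F$ factors as $\alpha \prod_k (\lambda_k x - \mu_k y)^{e_k}$, and $\beta_0 F_x/F + \beta_1 F_y/F$ then contributes a simple pole along each line $\{\lambda_k x - \mu_k y = 0\}$ with residue proportional to $e_k(\beta_0\lambda_k - \beta_1\mu_k)$. Matching pole supports forces each such line either to be among $\{x=0\}, \{y=0\}, \{x+y=0\}$, or to satisfy $\beta_0\lambda_k = \beta_1\mu_k$, i.e., to be the line $[\lambda_k : \mu_k] = [\beta_1 : \beta_0]$. Comparing residues along $x=0$, $y=0$, $x+y=0$ then yields equations of the form $e_j = -u_{ic_i}/l_{ic_i}$; non-negativity of both sides collapses each such pair to zero. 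This leaves $F = \alpha(\beta_1 x - \beta_0 y)^m$ and $h_1 = 1$, producing the required form. The main subtlety I foresee is case~\ref{dCbconstr_2}, where $\beta_{i_0} = 0$ makes the special line $[\beta_1 : \beta_0]$ coincide with one of the three kernel lines; the bookkeeping of which $T_{ic_i}$ is absorbed into $h_0$ changes, but the analytic skeleton of the argument is unchanged.
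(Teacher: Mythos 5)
Your proposal is correct, but it proves the ``only if'' direction by a genuinely different route than the paper. The paper's argument is much shorter: after the same appeal to Lemma~\ref{lem_hom2} and the splitting of $F$ into linear factors $\zeta_k T_0^{l_0}+\xi_k T_1^{l_1}$, it invokes the fact that the kernel of a locally nilpotent derivation is factorially closed (\cite[Principle~1]{Fr2006}), so every individual factor of $h$ must already lie in $\Ker\dCb$; a one-line evaluation of $\dCb(\zeta_k T_0^{l_0}+\xi_k T_1^{l_1})$ then forces $(\zeta_k,\xi_k)\sim(\beta_1,-\beta_0)$, and the non-kernel variables $T_{ic_i}$ are excluded from the monomial part for free. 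Your logarithmic-derivative and partial-fraction computation reaches the same conclusion without factorial closedness: beyond Lemma~\ref{lem_hom2} it uses only that $\dCb$ is a derivation of a domain together with the non-negativity of the exponents $u_{ic_i}$ and $e_k$, so in that sense it is more self-contained (local nilpotency of $\dCb$ enters nowhere in your ``only if'' direction). The price is more bookkeeping: you must justify that the identity can be read inside $\KK(T_0^{l_0},T_1^{l_1})\subseteq Q(\R)$ (i.e., that $T_0^{l_0}$, $T_1^{l_1}$ are algebraically independent, which is implicit in Lemma~\ref{lem_hom2}), and you must treat the degeneration in case~\ref{dCbconstr_2} where the special line $[\beta_1:\beta_0]$ collides with one of $\{x=0\}$, $\{y=0\}$, $\{x+y=0\}$ --- there the residue from $F$ along that line vanishes identically and the corresponding $T_{i_0c_{i_0}}$ sits in $h_0$, so the multiplicity $m$ is unconstrained, exactly as the statement requires. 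You have flagged this correctly, and the residue comparisons do close in every case, so the argument is sound; it is simply heavier machinery than the paper needs.
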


\begin{proof}
Let $h$ be homogeneous, $h \in \Ker \dCb$. 
According to Lemma~\ref{lem_hom2},
$$
h = \prod T_{ij}^{u_{ij}} \cdot F(T_0^{l_0}, T_1^{l_1})
$$
for some homogeneous polynomial $F$ in two variables and some $u_{ij} \in \Zgezero$. 
Every homogeneous polynomial in two variables over an algebraically closed field can be decomposed in a product of linear polynomials. 
Hence 
$$
h = \prod T_{ij}^{u_{ij}} \cdot \prod \limits_k \left(\zeta_k T_0^{l_0} + \xi_k T_1^{l_1}\right)
$$
for some $\zeta_k, \xi_k \in \KK$.

The kernel of a locally nilpotent derivation is factorially closed (see, for example, \cite[Principle 1]{Fr2006}). 
Thus $h \in \Ker \dCb$ implies that all $T_{ij}$ with $u_{ij} > 0$ in the first product and all $\left(\zeta_k T_0^{l_0} + \xi_k T_1^{l_1}\right)$ in the second product belong to $\Ker \dCb$ as well. 
By definition of $\dCb$, 
$$0 = \dCb\left(\zeta_k T_0^{l_0} + \xi_k T_1^{l_1}\right) = 
\begin{cases}
(\zeta_k \beta_0 + \xi_k \beta_1) \prod \limits_{i} \frac{\pa T_i^{l_i}}{\pa T_{ic_i}} & \text{in the case~\ref{dCbconstr_1},} \\
(\zeta_k \beta_0 + \xi_k \beta_1) \prod \limits_{i \ne i_0}\frac{\pa T_i^{l_i}}{\pa T_{ic_i}} & \text{in the case~\ref{dCbconstr_2}},
\end{cases}
$$
whence $\zeta_k \beta_0 + \xi_k \beta_1 = 0$. Consequently any pair $(\zeta_k, \xi_k)$ is proportional to $(\beta_1, -\beta_0)$. This yields that $h$ has the required form. 

Conversely, any $h$ of such a form is homogeneous and belongs to $\Ker \dCb$, since all factors are of this kind. 
\end{proof}

\begin{example}
\label{ex_dCb}
Suppose that all $l_{ij} = 1$, that is 
$\g = T_{01}\ldots T_{0n_0} + T_{11}\ldots T_{1n_1} + T_{21} \ldots T_{2n_2}$ (see Example~\ref{ex_grad}). Let us find all elementary derivations of $\R$. 
For any $C = (c_0, c_1, c_2)$, $1 \le c_i \le n_i$, we have $\lici = 1$, so both cases of Construction~\ref{dCb_constr} are possible. 
\begin{enumerate}[label = (\roman*), ref=(\roman*)]
\item 
Let $\beta = (\beta_0, \beta_1, \beta_2)$, $\beta_0 + \beta_1 + \beta_2 = 0$, $\beta_i \ne 0$ for all $i = 0, 1, 2$. 
Then we have $\dCb(T_{0c_0}) = \beta_0 T_1^{l_1} T_2^{l_2} / T_{1c_1}T_{2c_2}$ 
and two analogous formulas for $T_{1c_1}$ and $T_{2c_2}$, hence 
$$\dCb = \dfrac{T_0^{l_0}T_1^{l_1}T_2^{l_2}}{T_{0c_0}T_{1c_1} T_{2c_2}}
\left(
\beta_0 \dfrac{T_{0c_0}}{T_0^{l_0}} \dfrac{\pa}{\pa T_{0c_0}} +
\beta_1 \dfrac{T_{1c_1}}{T_1^{l_1}} \dfrac{\pa}{\pa T_{1c_1}} +
\beta_2 \dfrac{T_{2c_2}}{T_2^{l_2}} \dfrac{\pa}{\pa T_{2c_2}} 
\right).$$
\item
Consider for example the case $i_0 = 2$, that is $\beta = (\beta_0, -\beta_0, 0)$. 
Then we get $\dCb(T_{0c_0}) = \beta_0 T_1^{l_1} / {T_{1c_1}}$ and the same formula for $T_{1c_1}$, hence
$$
\dCb = \beta_0 \dfrac{T_0^{l_0}T_1^{l_1}}{T_{0c_0}T_{1c_1}}
\left(
\dfrac{T_{0c_0}}{T_0^{l_0}} \dfrac{\pa}{\pa T_{0c_0}} -
\dfrac{T_{1c_1}}{T_1^{l_1}} \dfrac{\pa}{\pa T_{1c_1}}
\right).$$
\end{enumerate}
Thus we have $n_0n_1n_2 + n_0n_1 + n_1n_2 + n_2n_0$ classes of $\dCb$. 
Applying Proposition~\ref{prop_ker} we obtain that every elementary derivation $\delta$ has the form $\delta = h\dCb$, where a homogeneous element $h$ belongs to the kernel of corresponding $\dCb$. 
In the above cases, an elementary derivation $\delta$ is of the following forms for some $u_{ij}, m \in \Zgezero$: 
\begin{gather*}
\delta = \!\!\!\!\prod \limits_{(i, j) \ne (i, c_i)} \!\!\!\!\! T_{ij}^{u_{ij}} \cdot \left(\beta_1 T_0^{l_0} - \beta_0 T_1^{l_1}\right)^m
\dfrac{T_0^{l_0}T_1^{l_1}T_2^{l_2}}{T_{0c_0}T_{1c_1} T_{2c_2}}
\left(
\beta_0 \dfrac{T_{0c_0}}{T_0^{l_0}} \dfrac{\pa}{\pa T_{0c_0}} +
\beta_1 \dfrac{T_{1c_1}}{T_1^{l_1}} \dfrac{\pa}{\pa T_{1c_1}} +
\beta_2 \dfrac{T_{2c_2}}{T_2^{l_2}} \dfrac{\pa}{\pa T_{2c_2}} 
\right);
\\
\delta = \alpha \!\!\!\!\prod \limits_{\substack{(i, j) \ne (0, c_0) \\ (i, j) \ne (1, c_1)}} \!\!\!\!\! T_{ij}^{u_{ij}} \cdot \left(T_0^{l_0} + T_1^{l_1}\right)^m
\dfrac{T_0^{l_0}T_1^{l_1}}{T_{0c_0}T_{1c_1}}
\left(
\dfrac{T_{0c_0}}{T_0^{l_0}} \dfrac{\pa}{\pa T_{0c_0}} -
\dfrac{T_{1c_1}}{T_1^{l_1}} \dfrac{\pa}{\pa T_{1c_1}}
\right).
\end{gather*}
\end{example}

\section{Main results} 

The following theorem is the main result of the paper. 

\begin{theorem}
\label{theor}
Every finely homogeneous locally nilpotent derivation of a trinomial algebra is elementary. 
\end{theorem}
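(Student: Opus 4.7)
The plan is to assemble the theorem from three existing inputs in the literature, together with a new reduction to a tractable base case, exploiting the structural lemmas for finely homogeneous elements already proved.

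First, I would split finely homogeneous locally nilpotent derivations of~$\R$ according to whether their degree $w_0$ lies in the weight cone or not. If $w_0$ is outside the weight cone, the derivation $\delta$ is primitive, and Theorem~4.3 of~\cite{ArHaHeLi2012} already asserts that $\delta$ is elementary. So from now on I restrict to the non-primitive case. Here I would invoke the results of~\cite{Za2017}, which establish the theorem for several families of~$\R$, including all non-factorial trinomial algebras. It is therefore enough to pin down the exponent configurations $(l_0,l_1,l_2)$ not handled by these two sources and treat finely homogeneous locally nilpotent derivations on those.

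Second, for each remaining configuration, I would analyze $\delta$ through its values $\delta(T_{ij})$. Each $\delta(T_{ij})$ is either zero or finely homogeneous, so Lemma~\ref{lem_hom2} factors it as a monomial in the $T_{kl}$ times a homogeneous polynomial in $T_0^{l_0}$ and $T_1^{l_1}$. Combined with the identity $T_0^{l_0}+T_1^{l_1}+T_2^{l_2}=0$ in~$\R$ and with the constraint $\delta(\g)=0$, these formulas should determine a candidate triple $C=(c_0,c_1,c_2)$ (the indices where $\delta$ does not vanish) and candidate scalars $\beta=(\beta_0,\beta_1,\beta_2)$ (read off from the leading coefficients of~$\delta(T_{ic_i})$). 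The goal is to show that $\delta = h\cdot\dCb$ for an appropriate~$\dCb$ from Construction~\ref{dCb_constr} and some $h\in\Ker\dCb$, whose possible shape is then fixed by Proposition~\ref{prop_ker}.

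Third, when the direct matching is not sufficient, I would follow the reduction announced in the introduction: Proposition~\ref{pr_23m} brings the remaining case down to a situation in few variables, and Lemma~\ref{l_base} classifies finely homogeneous locally nilpotent derivations of the surface $X=\{x+y+z^k=0\}$. This surface is a genuine base case because eliminating $y=-x-z^k$ identifies $\KK[X]$ with a polynomial ring in two variables, so the locally nilpotent derivations are explicit, and checking that the finely homogeneous ones are all of the form~$h\dCb$ becomes a concrete calculation.

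The main obstacle I expect is the third step: showing that a hypothetical non-elementary non-primitive finely homogeneous locally nilpotent derivation of a general $\R$ indeed descends, via Proposition~\ref{pr_23m}, to a non-elementary finely homogeneous locally nilpotent derivation of the surface $\{x+y+z^k=0\}$, contradicting Lemma~\ref{l_base}. The delicate point is that the reduction has to be faithful with respect to the fine grading — in particular, one must control the group $K$ under the reduction, using the identification of $Q(\R)_0$ with $\KK(T_0^{l_0}/T_1^{l_1})$ from Lemma~\ref{fracfield} — so that the property of being elementary is neither created nor destroyed in passage to the base case. The bulk of the technical work lives in this bookkeeping of exponent data, indices~$c_i$, and scalars~$\beta_i$.
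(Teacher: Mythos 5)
You have the right raw ingredients (the results of \cite{Za2017}, the reduction of Proposition~\ref{pr_23m} to the surface $\{x+y+z^k=0\}$ of Lemma~\ref{l_base}, and the use of $Q(\R)_0=\KK\bigl(T_1^{l_1}/T_2^{l_2}\bigr)$ to force the proportionality constant into $\KK$), but your plan misses the pivot on which the paper's proof actually turns. The paper does not split cases by primitivity of the degree or by factoriality of the algebra; it splits by the non-kernel exponents of the derivation itself: either at most one non-kernel exponent equals $1$ (Proposition~\ref{pr_01m'}) or at least two do (Proposition~\ref{pr_23m}), and in both cases the conclusion is one and the same intermediate statement, namely that $\delta(T_0^{l_0})$, $\delta(T_1^{l_1})$, $\delta(T_2^{l_2})$ lie in a one-dimensional subspace. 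Elementarity then follows from Proposition~\ref{pr_prop}, which is \cite[Proposition~1]{Za2017}. Your second step --- reading off $C$ and $\beta$ from the leading coefficients of the $\delta(T_{ij})$ and matching against Construction~\ref{dCb_constr} --- is precisely the content of Proposition~\ref{pr_prop} and is not a routine verification; omitting it, or any equivalent bridge from the computed data to the form $h\,\dCb$, is the main gap. Relatedly, the partition in your first step is not clean: primitivity is a property of an individual derivation, not of the exponent configuration $(l_0,l_1,l_2)$, so one cannot reduce to ``the exponent configurations not handled by these two sources''; a single trinomial algebra can carry both primitive and non-primitive finely homogeneous locally nilpotent derivations (see \cite[Example~7]{Za2017}).

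A second, more local issue is your framing of the base-case step as a contradiction argument transporting ``non-elementarity'' down to $\{x+y+z^k=0\}$. Elementarity is awkward to transport: the reduction in the proof of Proposition~\ref{pr_23m} passes to the field $\LL$ obtained by adjoining all kernel variables and takes a $k$-th root of $T_2^{l_2}$, which changes the grading group, so the faithfulness you worry about would indeed be delicate. The paper avoids this entirely: the only property carried back from the surface is the proportionality of $\pa(x)$, $\pa(y)$, $\pa(z^k)$, and the sole subtlety is that the resulting constant $\ell$ lies a priori in $\LL$ rather than in $\KK$; this is exactly where Lemma~\ref{fracfield} enters. If you reorganize the argument around the one-dimensionality statement and Proposition~\ref{pr_prop}, the rest of your plan goes through.
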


For example, all finely homogeneous locally nilpotent derivations of the trinomial algebra~$\R$ for $\g = T_{01}\ldots T_{0n_0} + T_{11}\ldots T_{1n_1} + T_{21} \ldots T_{2n_2}$ are described in Example~\ref{ex_dCb}. 

\smallskip

Let us start to prove Theorem~\ref{theor}.

\smallskip

The following lemma includes basic properties of locally nilpotent derivations. The proof can be found, for example, in~\cite[Principle 5 and Corollary 1.20]{Fr2006}. 

\begin{lemma} 
\label{fr_lem}
Suppose $\delta\colon R \to R$ is a locally nilpotent derivation of a domain $R$ and $f, g \in R$. 
\begin{enumerate}
\item \label{fr_lem_f} 
If $f \divid \delta(f)$, then $\delta(f) = 0$.
\item \label{fr_lem_fg}
If $f \divid \delta(g)$ and $g \divid \delta(f)$, then $\delta(f) = 0$ or $\delta(g) = 0$. 
\end{enumerate}
\end{lemma}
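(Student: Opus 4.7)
The plan is to use the standard degree function $\nu$ associated to a locally nilpotent derivation. For each nonzero $r \in R$, set $\nu(r) = \max\{k \in \Zgezero : \delta^k(r) \ne 0\}$; the maximum exists by local nilpotence. The central identity is the multiplicativity $\nu(rs) = \nu(r) + \nu(s)$ for nonzero $r, s \in R$. This is proved by applying the general Leibniz rule
$$
\delta^n(rs) = \sum_{k=0}^{n} \binom{n}{k}\, \delta^k(r)\, \delta^{n-k}(s)
$$
with $n = \nu(r) + \nu(s)$: all summands with $k > \nu(r)$ or $n - k > \nu(s)$ vanish, leaving the single term $\binom{n}{\nu(r)}\,\delta^{\nu(r)}(r)\,\delta^{\nu(s)}(s)$, which is nonzero because $R$ is a domain and $\KK$ has characteristic zero. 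A direct consequence of the definition is $\nu(\delta(r)) = \nu(r) - 1$ whenever $\delta(r) \ne 0$.

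For part (a), assume $\delta(f) \ne 0$ and write $\delta(f) = fh$; then $f$ and $h$ are both nonzero. Multiplicativity of $\nu$ gives $\nu(\delta(f)) = \nu(f) + \nu(h) \ge \nu(f)$, contradicting $\nu(\delta(f)) = \nu(f) - 1$. Hence $\delta(f) = 0$. For part (b), suppose both $\delta(f)$ and $\delta(g)$ are nonzero and write $\delta(f) = gh_1$ and $\delta(g) = fh_2$ with $h_1, h_2 \ne 0$. The same computation yields $\nu(f) - 1 \ge \nu(g)$ and $\nu(g) - 1 \ge \nu(f)$; adding these gives the contradiction $-2 \ge 0$, so one of $\delta(f), \delta(g)$ must vanish.

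The only nontrivial step is the multiplicativity of $\nu$, which is exactly where the characteristic-zero and domain assumptions enter the argument; once multiplicativity is in place, both assertions reduce to comparing how $\nu$ behaves under multiplication versus under $\delta$.
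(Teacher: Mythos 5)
Your proof is correct: the paper gives no proof of its own here, referring instead to \cite[Principle 5 and Corollary 1.20]{Fr2006}, and your degree-function argument via $\nu(r)=\max\{k:\delta^k(r)\ne 0\}$ and the Leibniz-rule computation is exactly the standard proof found in that reference. One cosmetic point: your displayed computation literally establishes only $\nu(rs)\ge\nu(r)+\nu(s)$ (the reverse inequality needs the extra one-line observation that every term of $\delta^m(rs)$ vanishes for $m>\nu(r)+\nu(s)$), but the inequality you prove is all that parts (a) and (b) actually use, so the argument stands as written.
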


\begin{definition}
Let $\delta$ be a derivation of a trinomial algebra~$\R$. We say that a variable~$T_{ij}$ is a \textit{non-kernel variable} if $\delta(T_{ij}) \ne 0$. The exponent $l_{ij}$ of such variable is called a \textit{non-kernel exponent}.
\end{definition}

The following lemma is proved in~\cite[Lemma~3.4]{Ga2017}. 
For convenience of the reader we give a short proof below. 

\begin{lemma}
\label{3_lemma}
Let $\delta$ be a finely homogeneous locally nilpotent derivation of an algebra~$\R$. 
Then there exists at most one non-kernel variable in every monomial~$T_i^{l_i}$. 
\end{lemma}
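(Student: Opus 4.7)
My plan is to argue by contradiction. Suppose $T_i^{l_i}$ contains two non-kernel variables; after renumbering within the $i$-th block assume they are $T_{i1}$ and $T_{i2}$. Set $\{j, k\} := \{0, 1, 2\} \setminus \{i\}$ and $\mu := \deg T_0^{l_0} = \deg T_1^{l_1} = \deg T_2^{l_2} \in K$.

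I first want a presentation of $\delta(T_{i1})$ and $\delta(T_{i2})$ via Lemma~\ref{lem_hom2} in which the ``polynomial in two variables'' factor uses the two monomials from outside block~$i$, namely $(T_j^{l_j}, T_k^{l_k})$ rather than $(T_0^{l_0}, T_1^{l_1})$. This variant is immediate from Lemma~\ref{lem_hom2} via the substitution $T_0^{l_0} + T_1^{l_1} + T_2^{l_2} = 0$ in~$\R$. Thus for $p \in \{1, 2\}$ I have
$$
\delta(T_{ip}) = M_p \cdot F_p(T_j^{l_j}, T_k^{l_k}),
$$
with $M_p = \prod_{a, b} T_{ab}^{u_{ab}^{(p)}}$ a monomial, $F_p$ a homogeneous polynomial of total degree $k_p \in \Zgezero$, and no variable of block~$i$ appearing in $F_p$. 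Then Lemma~\ref{fr_lem}\ref{fr_lem_f} applied to $T_{ip}$ yields $T_{ip} \nmid M_p$, i.e.\ $u_{ip}^{(p)} = 0$ for $p = 1, 2$, while Lemma~\ref{fr_lem}\ref{fr_lem_fg} rules out the simultaneous divisibilities $T_{i2} \divid \delta(T_{i1})$ and $T_{i1} \divid \delta(T_{i2})$; by the symmetry between indices $1$ and $2$ I assume without loss of generality $u_{i2}^{(1)} = 0$.

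The contradiction is then a degree computation in~$K$. Subtracting the two equalities $\deg T_{ip} + \deg \delta = \deg M_p + k_p \mu$ and lifting the result to $\ZZ^n$ via $\mu = Q\bigl(\sum_q l_{iq} e_{iq}\bigr)$, the vector
$$
e_{i1} - e_{i2} - \sum_{a, b}(u_{ab}^{(1)} - u_{ab}^{(2)})\, e_{ab} - (k_1 - k_2) \sum_q l_{iq} e_{iq}
$$
lies in $\Im L^*$, hence equals $\alpha r_1 + \beta r_2$ for some $\alpha, \beta \in \ZZ$, where $r_1, r_2$ are the rows of~$L$. The block-$i$ piece of $\alpha r_1 + \beta r_2$ has the form $\gamma_i \sum_q l_{iq} e_{iq}$ with $\gamma_0 = -(\alpha + \beta)$, $\gamma_1 = \alpha$, $\gamma_2 = \beta$. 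Comparing the $(i, 2)$-coordinates, using $u_{i2}^{(1)} = u_{i2}^{(2)} = 0$, yields $(\gamma_i + k_1 - k_2)\, l_{i2} = -1$, which forces $l_{i2} = 1$ and $\gamma_i + k_1 - k_2 = -1$. Comparing the $(i, 1)$-coordinates (with $u_{i1}^{(1)} = 0$) then gives $u_{i1}^{(2)} = (\gamma_i + k_1 - k_2)\, l_{i1} - 1 = -l_{i1} - 1 < 0$, contradicting $u_{i1}^{(2)} \in \Zgezero$.

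The main obstacle I anticipate is the very first step: stated verbatim, Lemma~\ref{lem_hom2} puts $F_p$ in $(T_0^{l_0}, T_1^{l_1})$, which for $i \in \{0, 1\}$ secretly contains variables from block~$i$ and wrecks the divisibility bookkeeping supplied by Lemma~\ref{fr_lem}. Once the presentation is switched to the monomials of the two other blocks, the rest is a mechanical inspection of block-$i$ coordinates in $\ZZ^n / \Im L^*$.
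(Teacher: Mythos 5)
Your argument is correct, but it follows a genuinely different route from the paper. The paper disposes of the statement with a short auxiliary-grading trick: assuming $T_{i1}, T_{i2}$ are both non-kernel, it introduces the $\ZZ$-grading $\rdeg T_{i1} = l_{i2}$, $\rdeg T_{i2} = -l_{i1}$, $\rdeg T_{ab} = 0$ otherwise (under which $\g$ is homogeneous of degree $0$), notes via Lemma~\ref{l_rdeg} that $\delta$ is $\rdeg$-homogeneous, and then observes that $\rdeg\delta \ge 0$ forces every monomial of $\delta(T_{i1})$ to contain the unique positive-degree variable $T_{i1}$, while $\rdeg\delta \le 0$ forces the same for $T_{i2}$ --- either way contradicting Lemma~\ref{fr_lem}\ref{fr_lem_f}. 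You instead fix, for each $p$, a presentation $\delta(T_{ip}) = M_p F_p(T_j^{l_j}, T_k^{l_k})$ (a legitimate variant of Lemma~\ref{lem_hom2}, obtained by substituting $T_0^{l_0} = -T_1^{l_1}-T_2^{l_2}$), use Lemma~\ref{fr_lem} to kill the relevant block-$i$ exponents of the monomial factors, and then extract a contradiction from the block-$i$ coordinates of a relation in $\ZZ^n/\Im L^*$; I checked the coordinate computation ($(\gamma_i + k_1 - k_2)l_{i2} = -1$, hence $u_{i1}^{(2)} = -l_{i1}-1 < 0$) and it is sound, including the point that all constraints refer to one fixed choice of presentation. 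The trade-off: your proof leans on the structure theory of homogeneous elements (Lemma~\ref{lem_hom2}, i.e.\ the machinery of Arzhantsev--Hausen--Herppich--Liendo) and on explicit bookkeeping in the grading lattice, whereas the paper's proof uses only the universality of the fine grading and is shorter and more transportable; on the other hand, your computation makes visible along the way exactly which integrality constraints the exponents would have to satisfy, which is the same style of argument the paper later deploys in Proposition~\ref{pr_01m'} with a $\ZZ_m$-grading.
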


\begin{proof}
Assume the converse. Then there is a monomial with at least two non-kernel variables. 
We can assume that these variables are $T_{01}$ and $T_{02}$ in $T_0^{l_0}$. 

Consider the following $\ZZ$-grading on the algebra~$\KK[T_{ij}]$:
$$
\rdeg T_{01} = l_{02}, \quad \rdeg T_{02} = -l_{01}, \quad \rdeg T_{ij} = 0 \;\text{for all other $(i, j)$.}
$$
The trinomial $\g$ is homogeneous (of degree $0$) with respect to this grading, therefore the \hbox{$\rdeg$-grading} is a well-defined grading on the factor algebra~$\R$. 
By Lemma~\ref{l_rdeg}, it follows that the derivation~$\delta$ is $\rdeg$-homogeneous. 
We have the following two cases.

1) $\rdeg \delta \ge 0$. 
Then $\rdeg \delta(T_{01}) = \rdeg \delta + \rdeg T_{01} > 0$. 
Note that $T_{01}$ is a unique variable with a positive degree. 
Hence every monomial in $\delta(T_{01})$ includes $T_{01}$ and therefore $T_{01}$ divides $\delta(T_{01})$. But $\delta(T_{01}) \ne 0$ by assumption. This contradicts Lemma~\ref{fr_lem}\ref{fr_lem_f}. 

2) $\rdeg \delta \le 0$. 
Then $\rdeg \delta(T_{02}) = \rdeg \delta + \rdeg T_{02} < 0$. 
In the same way, $T_{02}$ divides $\delta(T_{02})$ and 
this contradicts Lemma~\ref{fr_lem}\ref{fr_lem_f}.
\end{proof}

Proof of the following Proposition is given in~\cite[Proposition~1]{Za2017}. 

\begin{proposition}
\label{pr_prop}
Let $\delta \colon \R \to \R$ be a finely homogeneous locally nilpotent derivation. Suppose that $\delta(T_0^{l_0})$, $\delta(T_1^{l_1})$, and $\delta(T_2^{l_2})$ lie in a subspace of~$\R$ of dimension~$1$. Then the derivation~$\delta$ is elementary. 
\end{proposition}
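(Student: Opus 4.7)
The plan is to read off the right $\dCb$ directly from the action of $\delta$ on the monomials $T_i^{l_i}$, and then exhibit a homogeneous kernel element so that $\delta$ factors as $h'\dCb$. Apply $\delta$ to the defining relation $\g = 0$: this gives $\delta(T_0^{l_0}) + \delta(T_1^{l_1}) + \delta(T_2^{l_2}) = 0$. The one-dimensionality assumption yields a homogeneous $h \in \R$ and scalars $\gamma_0, \gamma_1, \gamma_2 \in \KK$ with $\delta(T_i^{l_i}) = \gamma_i h$ and $\gamma_0 + \gamma_1 + \gamma_2 = 0$. If all three $\gamma_i$ vanish, Lemma~\ref{3_lemma} together with $\delta(T_i^{l_i}) = l_{ic_i} (T_i^{l_i}/T_{ic_i}) \delta(T_{ic_i})$ and the fact that $\R$ is a domain forces $\delta = 0$, which is trivially elementary. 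Otherwise $\gamma_0 + \gamma_1 + \gamma_2 = 0$ leaves two possibilities: either all three $\gamma_i$ are nonzero, or exactly one $\gamma_{i_0}$ vanishes. For each $i$ with $\gamma_i \ne 0$, Lemma~\ref{3_lemma} provides a unique non-kernel variable $T_{ic_i}$ in $T_i^{l_i}$.

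Set $\beta := (\gamma_0, \gamma_1, \gamma_2)$ and choose $C := (c_0, c_1, c_2)$. In the first configuration this is a candidate $\dCb$ of case~\ref{dCbconstr_1}; in the second, of case~\ref{dCbconstr_2} with $i_0$ as above. To invoke Construction~\ref{dCb_constr} I still need the side condition that at most one relevant $c_i$ has $l_{ic_i} > 1$. Suppose two such indices $i_1 \ne i_2$ exist. Using $l_{ic_i}(T_i^{l_i}/T_{ic_i}) \delta(T_{ic_i}) = \gamma_i h$ and the observation that $l_{ic_i} > 1$ makes $T_{ic_i}$ a factor of $T_i^{l_i}/T_{ic_i}$, both $T_{i_1 c_{i_1}}$ and $T_{i_2 c_{i_2}}$ must divide $h$ in $\R$. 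Feeding these divisibilities back into the formulas for $\delta(T_{i_1 c_{i_1}})$ and $\delta(T_{i_2 c_{i_2}})$ shows $T_{i_2 c_{i_2}} \mid \delta(T_{i_1 c_{i_1}})$ and $T_{i_1 c_{i_1}} \mid \delta(T_{i_2 c_{i_2}})$, whence Lemma~\ref{fr_lem}\ref{fr_lem_fg} forces one of these derivatives to vanish, contradicting the non-kernel status of that variable.

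With $\dCb$ well-defined, direct computation gives $\dCb(T_i^{l_i}) = \beta_i M$ for an $i$-independent element $M \in \R$ built from the non-kernel variables. Setting $h' := h/M$ inside $Q(\R)$, the derivation $\delta - h'\dCb$ annihilates every $T_i^{l_i}$; since $\R$ is a domain and the prefactor $T_i^{l_i}/T_{ic_i}$ is nonzero, it also annihilates every $T_{ic_i}$, and it kills the kernel variables trivially. Hence $\delta = h'\dCb$ on all generators, which in particular forces $h' \in \R$. Finally, since both $\delta$ and $\dCb$ are locally nilpotent, the standard fact that $fD$ can be locally nilpotent (for a nonzero locally nilpotent $D$ on a domain) only when $f \in \Ker D$ yields $h' \in \Ker \dCb$, so $\delta = h' \dCb$ is elementary in the sense of Section~\ref{sect_dCb}.

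The main obstacle I expect is the divisibility bookkeeping in the step ruling out two non-kernel exponents exceeding one: because $\R$ need not be a unique factorization domain, making the claims $T_{i c_i} \mid h$ and $T_{i_1 c_{i_1}} \mid \delta(T_{i_2 c_{i_2}})$ rigorous likely requires Lemma~\ref{lem_hom2} so that one argues with explicit monomial-times-binary-polynomial representatives of $h$ and of $\delta(T_{ic_i})$.
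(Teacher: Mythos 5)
The paper does not actually prove this proposition --- it cites \cite[Proposition~1]{Za2017} --- so there is no in-text argument to compare against; your skeleton (read $C$ and $\beta$ off from $\delta(T_i^{l_i})=\gamma_i h$, check the side conditions of Construction~\ref{dCb_constr}, then write $\delta=h'\dCb$ with $h'=h/M$) is the natural one and is surely close to what that reference does. But as written the proposal has a genuine gap at the decisive step. You claim that ``$\delta = h'\dCb$ on all generators \ldots in particular forces $h' \in \R$.'' This is a non sequitur: from $\delta(T_{ic_i}) = h'\,\dCb(T_{ic_i})$ with both sides in $\R$ and $h'\in Q(\R)$ you only get that $h'$ is a ratio of two regular functions, not that it is regular (compare $y\,\pa_x = \tfrac{1}{y}\cdot(y^2\pa_x)$ on $\KK[x,y]$, both locally nilpotent). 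What must actually be proved is the divisibility $\prod_{k\ne i}\tfrac{\pa T_k^{l_k}}{\pa T_{kc_k}} \bigm| \delta(T_{ic_i})$ in $\R$, equivalently that the full product $M$ divides $h$. Your computation does show that each single factor $T_k^{l_k}/T_{kc_k}$ divides $h$ (since $h=\gamma_k^{-1}\lici\,(T_k^{l_k}/T_{kc_k})\,\delta(T_{kc_k})$), but passing from ``each factor divides $h$'' to ``the product divides $h$'' is exactly the kind of inference that fails outside a UFD, and $\R$ is not factorial in general (the paper explicitly treats non-factorial $\R$). This divisibility is the substantive content of the proposition and is why it warrants a standalone proof in \cite{Za2017}; it is not addressed in your write-up.

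A second, smaller gap sits in your verification of the side condition of Construction~\ref{dCb_constr}: to pass from $T_{i_2c_{i_2}} \mid (T_{i_1}^{l_{i_1}}/T_{i_1c_{i_1}})\,\delta(T_{i_1c_{i_1}})$ to $T_{i_2c_{i_2}} \mid \delta(T_{i_1c_{i_1}})$ you would need $T_{i_2c_{i_2}}$ to be prime in $\R$, which can fail: $\R/(T_{i_2c_{i_2}})$ is a quotient by $T_{i_1}^{l_{i_1}}+T_{i_0}^{l_{i_0}}$ plus a variable, and such binomials are often reducible, so the quotient need not be a domain. You flag this yourself and hope Lemma~\ref{lem_hom2} repairs it, but the representation in that lemma is not unique (e.g.\ $T_2^{l_2}=-(T_0^{l_0}+T_1^{l_1})$), so one cannot simply read off exponents. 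This step is, however, easily fixable without any divisibility: if two non-kernel exponents $\lici$ exceed $1$, the $\ZZ_{\lici}$-grading argument already used in the proof of Proposition~\ref{pr_01m'} gives $T_{i_1c_{i_1}}\mid\delta(T_{i_2c_{i_2}})$ and $T_{i_2c_{i_2}}\mid\delta(T_{i_1c_{i_1}})$, contradicting Lemma~\ref{fr_lem}\ref{fr_lem_fg}. The remaining pieces of your argument (the case $\gamma=0$, the identification of $M$, and the deduction $h'\in\Ker\dCb$ from local nilpotency of $h'\dCb$) are fine.
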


The following proposition is in fact proved in~\cite[Theorem~1]{Za2017}. 
However, the required formulation is stronger than in~\cite[Theorem~1]{Za2017}, and for convenience of the reader a proof is given below. 

\begin{proposition}
\label{pr_01m'}
Let $\delta$ be a finely homogeneous locally nilpotent derivation of a trinomial algebra $\R$. Let at most one non-kernel exponent be equal to~$1$. Then $\delta(T_0^{l_0})$, $\delta(T_1^{l_1})$, and $\delta(T_2^{l_2})$ lie in a subspace of~$\R$ of dimension~$1$. 
\end{proposition}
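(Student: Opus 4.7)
The plan is to set $A_i := \delta(T_i^{l_i})$ for $i=0,1,2$. Since $T_0^{l_0}+T_1^{l_1}+T_2^{l_2}=0$ in $\R$, applying $\delta$ gives $A_0+A_1+A_2=0$, so the assertion reduces to showing that any two of $A_0,A_1,A_2$ are linearly dependent over $\KK$. By Lemma~\ref{3_lemma}, each row $T_i^{l_i}$ contains at most one non-kernel variable; when present, I would denote it by $T_{ic_i}$, so that
$$A_i \;=\; l_{ic_i}\,\frac{T_i^{l_i}}{T_{ic_i}}\,\delta(T_{ic_i}),$$
while $A_i=0$ if the $i$-th row has no non-kernel variable.

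Next I would dispose of the degenerate cases, which are immediate: two vanishing $A_i$'s force the third to vanish by $\sum A_i=0$, and one vanishing $A_i$ leaves the other two as negatives of each other. So one may restrict to the main case in which all three $A_i$ are non-zero; then each row has a non-kernel variable $T_{ic_i}$, and the hypothesis lets me relabel so that $l_{1c_1},l_{2c_2}\ge 2$.

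Then I would pass to the field of degree-zero fractions. By Lemma~\ref{fracfield}, $Q(\R)_0=\KK(t)$ with $t:=T_0^{l_0}/T_1^{l_1}$. Since $A_0,A_1,A_2$ are $K$-homogeneous of the same degree, the ratio $\phi := A_0/A_1$ lies in $\KK(t)$, and the identity $1+\phi+A_2/A_1=0$ reduces the problem to proving $\phi\in\KK$. Substituting the expressions above gives
$$\phi \;=\; \frac{l_{0c_0}}{l_{1c_1}}\;t\;\cdot\;\frac{T_{1c_1}\,\delta(T_{0c_0})}{T_{0c_0}\,\delta(T_{1c_1})},$$
so the task becomes that of showing the last factor is a $\KK^*$-multiple of $1/t$.

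For this final step I would write each $A_i$ via Lemma~\ref{lem_hom2} as $A_i=\prod T_{kj}^{u^{(i)}_{kj}}\,F_i(T_0^{l_0},T_1^{l_1})$ with $F_i$ a homogeneous polynomial in two variables, and use the divisibilities $T_{1c_1}^{l_{1c_1}-1}\mid A_1$ and $T_{2c_2}^{l_{2c_2}-1}\mid A_2$ (valid because $l_{1c_1},l_{2c_2}\ge 2$) to constrain the exponents $u^{(i)}_{kj}$. After extracting the greatest common monomial factor of the $A_i$'s, the identity $A_0+A_1+A_2=0$ descends to a polynomial relation among $F_0,F_1,F_2$; this relation is genuine because $T_0^{l_0}$ and $T_1^{l_1}$ are algebraically independent in $\R$. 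Combining it with Lemma~\ref{fr_lem}\ref{fr_lem_fg} applied to the pairs $(T_{ic_i},\delta(T_{ic_i}))$ should force the $F_i$ to be pairwise proportional and the monomial prefactors to match, yielding $\phi\in\KK$. The hard part will be precisely this translation of the divisibility data into a polynomial identity: without the hypothesis that at most one non-kernel exponent equals~$1$, the $\{x+y+z^k=0\}$-type configuration that is the subject of Lemma~\ref{l_base} produces genuinely non-constant $\phi$, so the assumption must enter essentially at this point.
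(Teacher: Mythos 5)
Your opening moves match the paper: the identity $A_0+A_1+A_2=0$, the reduction via Lemma~\ref{3_lemma} to at most one non-kernel variable per monomial, the immediate dispatch of the cases with some $A_i=0$, and the relabelling so that two of the three non-kernel exponents, say $l_{0c_0}$ and $l_{1c_1}$, exceed $1$. But from there you head in the wrong direction. The paper does not prove proportionality of the $A_i$ in the remaining case --- it proves that the remaining case \emph{cannot occur}: under the hypothesis there are never three non-kernel variables, so some $A_i$ vanishes and the conclusion is the trivial one. The mechanism is an auxiliary cyclic grading that your sketch never finds: set $\rdeg T_{0c_0}=[1]_{l_{0c_0}}$ and $\rdeg T_{ij}=[0]_{l_{0c_0}}$ otherwise; the trinomial is $\rdeg$-homogeneous, so by Lemma~\ref{l_rdeg} the derivation $\delta$ is $\rdeg$-homogeneous. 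If $\rdeg\delta(T_{0c_0})\ne[0]$ then every monomial of $\delta(T_{0c_0})$ must contain $T_{0c_0}$ (the only variable of nonzero degree), contradicting Lemma~\ref{fr_lem}\ref{fr_lem_f}; hence $\rdeg\delta=[-1]_{l_{0c_0}}\ne[0]_{l_{0c_0}}$ precisely because $l_{0c_0}>1$, which forces $\rdeg\delta(T_{1c_1})\ne[0]$ and therefore $T_{0c_0}\mid\delta(T_{1c_1})$. The symmetric grading on $T_{1c_1}$ gives $T_{1c_1}\mid\delta(T_{0c_0})$, and Lemma~\ref{fr_lem}\ref{fr_lem_fg} yields the contradiction. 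This is exactly the mutual-divisibility input that your planned appeal to Lemma~\ref{fr_lem}\ref{fr_lem_fg} would need, and it is the one thing your proposal does not supply: the divisibilities you do record, $T_{1c_1}^{l_{1c_1}-1}\mid A_1$ and $T_{2c_2}^{l_{2c_2}-1}\mid A_2$, follow from the Leibniz rule alone and say nothing about $\delta(T_{1c_1})$ or $\delta(T_{0c_0})$ themselves. The step ``should force the $F_i$ to be pairwise proportional'' is therefore a genuine gap, not a routine verification.

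Two further remarks. First, since the case you are trying to analyse is actually void, any attempt to derive structural information about the $F_i$ inside it is bound to be either circular or to secretly reprove the impossibility; the clean route is the contradiction. Second, your closing claim that the $\{x+y+z^k=0\}$ configuration of Lemma~\ref{l_base} ``produces genuinely non-constant $\phi$'' is false: that lemma (and Proposition~\ref{pr_23m}) establishes the very same one-dimensionality conclusion when two non-kernel exponents equal $1$, with $\rpa(y)$ and $\rpa(z^k)$ proportional over $\KK$. What changes across the two propositions is the proof, not the conclusion, so the hypothesis does not enter where you expect it to; it enters through the inequality $l_{0c_0}>1$ that makes $\rdeg\delta$ nonzero in the cyclic grading above.
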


\begin{proof}
According to Lemma~\ref{3_lemma}, there are at most three non-kernel variables. 
Suppose that there exist at most two non-kernel variables. 
Then at least one of $\delta(T_0^{l_0})$, $\delta(T_1^{l_1})$, and $\delta(T_2^{l_2})$ is equal to~$0$, and the assertion follows from $\delta(\g) = \delta(0) = 0$ in~$\R$. 
Hence we can assume without loss of generality that there are three non-kernel variables $T_{01}$, $T_{11}$, $T_{21}$ with non-kernel exponents $l_{01} > 1$, $l_{11} > 1$, $l_{21}$. 

Denote by $\ZZ_m = \{[0]_m, [1]_m, \ldots, [m - 1]_m\}$ the cyclic group of order $m$. Consider the following $\ZZ_m$-grading on $\KK[T_{ij}]$: 
$$
\rdeg T_{01} = [1]_{l_{01}}, \quad \rdeg T_{ij} = [0]_{l_{01}} \;\;\text{for all other $(i, j)$.}
$$
Since $\g$ is a homogeneous polynomial with respect to this grading (of degree $[0]_{l_{01}}$), we get a well-defined grading $\rdeg$ on~$\R$. 

If $\rdeg \delta(T_{01}) \ne [0]_{l_{01}}$ then $T_{01}$ divides $\delta(T_{01})$. 
Indeed, this inequality implies that the degrees of all monomials in $\delta(T_{01})$ do not equal~$[0]_{l_{01}}$, 
and it is possible only if every monomial includes $T_{01}$ 
(since $T_{01}$ is a unique variable with a nonzero degree). 
On the other hand, the fact that $T_{01}$ divides $\delta(T_{01})$ together with $\delta(T_{01}) \ne 0$ contradicts Lemma~\ref{fr_lem}\ref{fr_lem_f}. 
Thus $\rdeg \delta(T_{01}) = [0]_{l_{01}}$. 

Then $\rdeg \delta = \rdeg \delta(T_{01}) - \rdeg T_{01} = [l_{01} - 1]_{l_{01}} \ne [0]_{l_{01}}$ (recall that $l_{01} > 1$). 
It follows that $\rdeg \delta(T_{11}) = \rdeg \delta + \rdeg T_{11} \ne [0]_{l_{01}}$. Arguing as above, we conclude that $T_{01}$ divides~$\delta(T_{11})$. 

Similarly, $T_{11}$ divides~$\delta(T_{01})$. This contradicts Lemma~\ref{fr_lem}\ref{fr_lem_fg}. 
\end{proof}

The proof of the following proposition is given in the next section. 

\begin{proposition}
\label{pr_23m}
Let $\delta$ be a finely homogeneous locally nilpotent derivation of a trinomial algebra $\R$. Let at least two non-kernel exponents be equal to~$1$. Then $\delta(T_0^{l_0})$, $\delta(T_1^{l_1})$, and $\delta(T_2^{l_2})$ lie in a subspace of~$\R$ of dimension~$1$.  
\end{proposition}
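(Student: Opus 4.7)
The plan is to split on the number of non-kernel variables of $\delta$, which by Lemma~\ref{3_lemma} is at most three. If there are at most two, then some $\delta(T_i^{l_i})$ vanishes and the relation $\delta(\g)=0$ forces the two remaining derivatives to be negatives of each other, immediately giving a $1$-dimensional $\KK$-subspace.

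For the main case of exactly three non-kernel variables, I would first relabel (using Lemma~\ref{3_lemma}) so that they are $T_{01}, T_{11}, T_{21}$, and, using the hypothesis of the proposition, further arrange $l_{01} = l_{11} = 1$. Set
$$
A = \prod_{j \ge 2} T_{0j}^{l_{0j}}, \quad B = \prod_{j \ge 2} T_{1j}^{l_{1j}}, \quad C = \prod_{j \ge 2} T_{2j}^{l_{2j}},
$$
all in $\Ker \delta$, so the relation $\g = 0$ reads $A T_{01} + B T_{11} + C T_{21}^{l_{21}} = 0$. The idea is to reduce this to the base trinomial $x + y + z^k = 0$ and apply Lemma~\ref{l_base}. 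To do so, I would localize $\R$ by inverting all kernel variables $T_{ij}$ with $j \ge 2$ and then tensor over $\KK[T_{ij} : j \ge 2]$ with its fraction field $L$; since everything being inverted lies in $\Ker \delta$, the derivation $\delta$ extends $L$-linearly to a finely homogeneous locally nilpotent derivation of the resulting algebra. Over a finite extension $L' = L(\gamma)$ with $\gamma^{l_{21}} = C$, the substitution $X = T_0^{l_0}$, $Y = T_1^{l_1}$, $Z = \gamma\, T_{21}$ identifies the extended algebra with $R_0 := L'[X, Y, Z]/(X + Y + Z^{l_{21}})$, under which the extended derivation $\delta_0$ is a finely homogeneous locally nilpotent derivation of $R_0$ satisfying $\delta_0(X) = \delta(T_0^{l_0})$, $\delta_0(Y) = \delta(T_1^{l_1})$, and $\delta_0(Z^{l_{21}}) = \delta(T_2^{l_2})$. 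Lemma~\ref{l_base} then yields that these three elements span a $1$-dimensional $L'$-subspace of $R_0$.

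The last step will be to descend $L'$-proportionality to $\KK$-proportionality inside $\R$. Since all three elements are finely homogeneous of the common degree $\deg \delta + \deg T_0^{l_0}$, any nonzero ratio between two of them lies in $Q(\R)_0$, which by Lemma~\ref{fracfield} equals $\KK\bigl(T_0^{l_0}/T_1^{l_1}\bigr)$. The same ratio also lies in $L'$. A transcendence-degree computation, using that $T_0^{l_0}/T_1^{l_1}$ is transcendental over $L$ (this follows from $T_{11}, T_{21}$ being algebraically independent over $L$ in $Q(\R)$), will show $\KK(T_0^{l_0}/T_1^{l_1}) \cap L' = \KK$, so each ratio is a scalar in $\KK$ and we obtain the required $1$-dimensional $\KK$-span.

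The hardest part I expect is the reduction step itself: verifying that localizing at the kernel variables, tensoring with $L$, and then adjoining $\gamma = C^{1/l_{21}}$ produces a well-defined finely homogeneous locally nilpotent derivation of $R_0$---in particular, reconciling the grading on $R_0$ with the fine grading on $\R$ (which may require enlarging the grading group to accommodate $\deg \gamma = (1/l_{21}) \deg C$) so that Lemma~\ref{l_base} genuinely applies. The descent itself is more routine once Lemma~\ref{fracfield} is in hand.
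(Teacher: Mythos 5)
Your proposal follows essentially the same route as the paper's proof: reduce to the base trinomial $x+y+z^k$ over an extension of $\KK$ generated by the kernel variables (the paper simply passes to the algebraic closure $\LL$ of that field, which absorbs the root $\gamma$, and handles the grading issue you flag by switching via Lemma~\ref{l_rdeg} to the coarser $\ZZ$-grading in which all kernel variables have degree $0$), apply Lemma~\ref{l_base}, and then descend the proportionality constant to $\KK$ using Lemma~\ref{fracfield}. The only minor difference is in the descent step: you argue that $\KK\bigl(T_0^{l_0}/T_1^{l_1}\bigr)\cap L'=\KK$ by a transcendence-degree argument, whereas the paper writes the ratio as $f_1/f_2$ with $f_1,f_2\in\KK[u,v]$ and notes that $f_1-\ell f_2\in(\g)$ forces $f_1=\ell f_2$ identically; both are correct.
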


\begin{proof}[Proof of Theorem~\ref{theor}]
The result follows from Propositions~\ref{pr_01m'}, \ref{pr_23m}, and~\ref{pr_prop}. 
\end{proof}

\section{Proof of Proposition~\ref{pr_23m}}

Recall that an element of polynomial algebra $v \in \KK[y, z]$ is a \textit{coordinate} if there is an element $u \in \KK[y, z]$ such that $\KK[u, v] = \KK[y, z]$. 
The proof of the following lemma can be found, for example, in~\cite[Corollary~4.7]{Fr2006}. 

\begin{lemma}
\label{jac}
Let $\pa$ be a derivation of~$\KK[y, z]$. Then $\pa$ is locally nilpotent if and only if $\pa$ is of the form $\pa(\,\cdot\,) = J(\,\cdot\,, g)$, where $J$ denotes the Jacobian and $g \in \KK[v]$ for some coordinate $v$ of $\KK[y, z]$.
\end{lemma}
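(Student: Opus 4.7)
I would prove the lemma in two directions; the ``only if'' direction is Rentschler's theorem and is the substantive content.

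For the ``if'' direction, choose $u \in \KK[y,z]$ with $\KK[u, v] = \KK[y, z]$. Expressing the Jacobian derivation $J(\cdot, g)$ in the $(u, v)$ basis yields $\pa(f) = c\, g'(v) \cdot \pa f/\pa u$ for every $f \in \KK[y,z]$, where $c \in \KK^*$ is the (constant) Jacobian of the coordinate change $(u, v) \to (y, z)$. This derivation annihilates $v$, is $\KK[v]$-linear, and lowers the $u$-degree by one on every monomial of $\KK[y,z] = \KK[v][u]$. Iterating $N$ times annihilates any polynomial of $u$-degree less than $N$, proving local nilpotence.

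For the ``only if'' direction, assume $\pa$ is a nonzero locally nilpotent derivation. Standard properties give that $\Ker \pa$ is factorially closed in $\KK[y, z]$ (Freudenburg, Principle~1) and has transcendence degree exactly $1$ over $\KK$ (since $\pa \ne 0$ on a transcendence-degree-$2$ domain). The crux is the structural claim: every such subalgebra is of the form $\KK[v]$ for some coordinate $v$ of $\KK[y, z]$; equivalently, every nontrivial $\GG_a$-action on $\AA^2_\KK$ is conjugate via $\mathrm{Aut}(\AA^2)$ to a translation. Granting this, pick a coordinate partner $u$ with $\KK[u, v] = \KK[y, z]$; then $\pa(u) \in \Ker \pa = \KK[v]$, say $\pa(u) = h(v)$, and setting $g := \int h(v)\, dv \in \KK[v]$ gives $\pa = J(\cdot, g)$ by comparing derivations in the $(u, v)$ basis (absorbing the constant Jacobian factor into $g$).

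The main obstacle is the structural claim above. The classical proof, due to Rentschler, leverages the Jung--van der Kulk theorem that $\mathrm{Aut}(\KK[y, z])$ is the amalgamated product of its affine and triangular subgroups. The strategy is to introduce a suitable bidegree on $\KK[y, z]$, examine the leading bidegree parts of $\pa(y)$ and $\pa(z)$, and apply either a linear or triangular automorphism that strictly decreases the maximum bidegree of the pair. Iterating this reduction eventually produces coordinates in which $\pa$ becomes triangular of the form $h(v) \cdot \pa/\pa u$; the corresponding $v$ generates $\Ker \pa$. Managing this induction --- particularly, choosing the right automorphism at each step to guarantee strict decrease of the bidegree --- is the technical heart of the argument.
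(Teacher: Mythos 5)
The paper offers no proof of this lemma at all---it simply cites \cite[Corollary~4.7]{Fr2006}, i.e.\ Rentschler's theorem---and your sketch is a correct outline of exactly that classical argument: the ``if'' direction by rewriting $J(\,\cdot\,, g)$ in coordinates $(u,v)$ as $c\,g'(v)\,\pa/\pa u$, and the ``only if'' direction by reducing to the fact that the kernel of a nonzero locally nilpotent derivation of $\KK[y,z]$ is generated by a coordinate (equivalently, that every $\GG_a$-action on $\AA^2$ is conjugate to a translation), proved via Jung--van der Kulk and a degree-reduction induction. The one step you leave unexpanded is precisely the content of the cited reference, so your proposal matches the paper's (implicit) approach.
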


First let us prove the simplest case of Proposition~\ref{pr_23m}. 

\begin{lemma}
\label{l_base}
Let $\g = x + y + z^k$. Let $\pa$ be a locally nilpotent finely homogeneous derivation of the algebra $\R$.
Then $\pa(x)$, $\pa(y)$, and $\pa(z^k)$ lie in a subspace of~$\R$ of dimension~$1$.
\end{lemma}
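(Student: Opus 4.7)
The approach is to identify $\R$ with the polynomial algebra $\KK[y, z]$ via the relation $x = -y - z^k$, and transport the fine grading to this polynomial algebra. A direct computation with
$$
L = \begin{pmatrix} -1 & 1 & 0 \\ -1 & 0 & k \end{pmatrix}
$$
gives $K = \ZZ^3/\Im L^\ast \cong \ZZ$ with $\deg z = 1$ and $\deg x = \deg y = k$, so the fine grading on $\R$ becomes the positive $\ZZ$-grading on $\KK[y, z]$ with weights $\deg y = k$ and $\deg z = 1$. Under this identification $\pa$ is a finely homogeneous locally nilpotent derivation of $\KK[y, z]$.

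By Lemma~\ref{jac}, we have $\ker \pa = \KK[w]$ for some coordinate $w$ of $\KK[y, z]$. Since $\pa$ is homogeneous, $\ker \pa$ is a graded subalgebra, so I would first argue that $w$ may be chosen homogeneous. Decomposing $w = \sum_i w_i$ into its homogeneous parts, each $w_i$ belongs to $\ker \pa = \KK[w]$; if $w$ had at least two nonzero components of positive degree, then expressing the top-degree part as a polynomial in $w$ and comparing leading and trailing weights (using positivity of the grading) would yield a contradiction. Hence, after subtracting a constant, we may assume $w$ is homogeneous of some positive degree.

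Next I would classify the homogeneous coordinates of $\KK[y, z]$ under the weighted grading. Such a coordinate $w$ is a semi-invariant of the $\KK^\ast$-action $\lambda \cdot (y, z) = (\lambda^k y, \lambda z)$, so the curve $V(w) \subset \AA^2$ is an irreducible $\KK^\ast$-invariant rational curve isomorphic to $\AA^1$. The irreducible $\KK^\ast$-invariant curves in $\AA^2$ are exactly the closures of one-dimensional orbits, namely $\{z = 0\}$, $\{y = 0\}$, and $\{y = c z^k\}$ for $c \in \KK^\ast$. Thus, up to a nonzero scalar, $w$ is either $z$ or $y + c z^k$ for some $c \in \KK$.

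The conclusion is then immediate in each case. If $w = \alpha z$, then $\pa(z) = 0$, so $\pa(z^k) = 0$ and $\pa(x) = -\pa(y) - \pa(z^k) = -\pa(y)$; all three elements lie in $\KK \cdot \pa(y)$. If $w = \alpha y + \beta z^k$ with $\alpha \ne 0$, then $\alpha\, \pa(y) + \beta\, \pa(z^k) = \pa(w) = 0$, so $\pa(y)$ and $\pa(z^k)$ are proportional, and the relation $\pa(x) + \pa(y) + \pa(z^k) = 0$ forces $\pa(x)$ to be proportional to them as well. I expect the main obstacle to be the preparatory steps: verifying that a generator of $\ker \pa$ can be chosen homogeneous, and classifying the homogeneous coordinates. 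The latter may be handled either through the Abhyankar--Moh theorem or, more directly, via the $\KK^\ast$-orbit analysis sketched above.
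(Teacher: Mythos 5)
Your proof is correct and follows essentially the same route as the paper: reduce to $\KK[y,z]$ with weights $\deg y = k$, $\deg z = 1$, invoke Lemma~\ref{jac}, use homogeneity of $\pa$ to force the generator of $\Ker\pa$ to be homogeneous, and identify it up to scalar with $z$ or $\zeta y + \xi z^k$. The only differences are cosmetic: you classify homogeneous coordinates via $\KK^*$-orbit closures and conclude directly from $\pa(w)=0$ where the paper computes the Jacobian explicitly, and you explicitly cover the case $w = z$, which the paper's claim that homogeneous irreducibles have the form $\zeta y + \xi z^k$ overlooks when $k>1$ (though the conclusion there is immediate anyway).
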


\begin{proof}
The fine grading in this case is given by
$$
\deg x = k, \quad \deg y = k, \quad \deg z = 1.
$$

The variety $\{x + y + z^k = 0\}$ in $\KK^3$ is isomorphic to the plane $\KK^2$ with coordinates $y$ and $z$, the isomorphism is given by $x = - y - z^k$. 
Hence we have the corresponding locally nilpotent derivation $\rpa$ of the polynomial algebra $\KK[y, z]$, which is homogeneous with respect to the grading
\begin{equation}
\label{tab}
\deg y = k, \quad \deg z = 1.
\end{equation}

By Lemma~\ref{jac}, $\rpa$ has the form $\rpa(\,\cdot\,) = J(\,\cdot\,, g(v))$, where $g$ is a polynomial in one variable over $\KK$ and $v$ is a variable of $\KK[y, z]$. 
One can prove that a derivation $J(\,\cdot\,, u)$ is homogeneous if and only if $u$ is a sum of a homogeneous element of $\KK[y, z]$ and a constant from~$\KK$. 
Since the constant does not affect $J(\,\cdot\,, u)$, we can suppose that $g(v)$ is a homogeneous element of $\KK[y, z]$ with respect to the grading~\eqref{tab}. 
It is easily shown that $g(v)$ is homogeneous as an element of $\KK[y, z]$ if and only if $g$ is homogeneous as a polynomial and $v$ is homogeneous as an element of $\KK[y, z]$. 
Hence $g(v) = \eta v^m$ for some $\eta \in \KK$, $m \in \NN$, and $v \in \KK[y, z]$ is homogeneous with respect to~\eqref{tab}. 

On the other side, $v$ is irreducible as a polynomial in $y$ and $z$ since it is a variable of $\KK[y, z]$. 
It can be proved that irreducible polynomials in $\KK[y, z]$, which are homogeneous with respect to~\eqref{tab}, have the form $\zeta y + \xi z^k$, $\zeta, \xi \in \KK$. 
Thus we get: 
$$\rpa(\,\cdot\,) = J(\,\cdot\,, g(v)) = J(\,\cdot\,, \eta v^m) = J(\,\cdot\,, \eta(\zeta y + \xi z^k)^m),$$
where $a, b, c \in \KK$, $m \in \NN$. 

Let us calculate $\rpa(y)$ and $\rpa(z^k)$. 
Note that for any $f \in \KK[y, z]$
$$\rpa(f) = J(f, \eta v^m) = \eta mv^{m-1} J(f, v) = 
\eta mv^{m-1} J(f, \zeta y + \xi z^k) = \eta mv^{m-1} \det
\begin{pmatrix}
f_y & f_z \\
\zeta & \xi kz^{k-1}
\end{pmatrix},$$
whence we get $\rpa(y) = \eta \xi mkv^{m-1}z^{k-1}$ and 
$\rpa(z^k) = -\eta \zeta mkv^{m-1}z^{k-1}$. 
Thus we prove that $\rpa(y)$ and $\rpa(z^k)$ are proportional over $\KK$ in $\KK[y, z]$. Since $\pa(y) = \rpa(y)$, $\pa(z) = \rpa(z)$, $\pa(x) \hm= -\rpa(y)-\rpa(z^k)$, we conclude that $\pa(x)$, $\pa(y)$, and $\pa(z^k)$ are proportional in~$\R$. 
\end{proof}

Now let us reduce Proposition~\ref{pr_23m} to the previous lemma. 

\begin{proof}[Proof of Proposition~\ref{pr_23m}]
Assume that there exist at most two non-kernel variables. 
Then at least one of $\delta(T_0^{l_0})$, $\delta(T_1^{l_1})$, $\delta(T_2^{l_2})$ is equal to $0$, 
and $\delta(\g) = \delta(0) = 0$ implies the assertion.
Thus without loss of generality it can be assumed that there are three non-kernel variables $T_{01}$, $T_{11}$, and $T_{21}$ with non-kernel exponents $l_{01} = l_{11} = 1$, $l_{21} = k$. 

\smallskip

Let $\LL$ be the algebraic closure of the field obtained from $\KK$ by adding all variables from the kernel of $\delta$:
$$\LL = \overline{\KK(T_{02}, \ldots, T_{0n_0}, T_{12}, \ldots, T_{1n_1}, T_{22}, \ldots, T_{2n_2})}.$$

Consider $\{T_0^{l_0} + T_1^{l_1} + T_2^{l_2} = 0\}$ as an algebraic variety over $\LL$ in $3$-dimensional space with coordinates $T_{01}$, $T_{11}$, and $T_{21}$. Denote $\LL[T_{i1}] := \LL[T_{01}, T_{11}, T_{21}]$. Since $\delta(T_{ij}) = 0$ for $j \ne 1$, we can consider $\delta$ as a derivation of $\LL[T_{i1}]\,/\,(\g)$. 
Indeed, a map $\delta$ defined by the same 
$\delta(T_{ij}) \hm \in \KK[T_{ij}]\,/\,(\g) \hm \subseteq \LL[T_{i1}]\,/\,(\g)$ 
and extended to a map 
$\LL[T_{i1}] \to \LL[T_{i1}]\,/\,(\g)$ by Leibniz rule, induces a well-defined derivation 
$\LL[T_{i1}]\,/\,(\g) \to \LL[T_{i1}]\,/\,(\g)$ since $\delta(\g) = 0$. 
Clearly, this $\delta$ is also locally nilpotent. 

\begin{lemma}
\label{lemma1}
$\delta(T_{0}^{l_0})$, $\delta(T_{1}^{l_1})$, and $\delta(T_{2}^{l_2})$ are proportional over $\LL$. 
\end{lemma}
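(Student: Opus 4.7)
The plan is to reduce Lemma~\ref{lemma1} to Lemma~\ref{l_base} by base-changing to $\LL$ and rescaling the three non-kernel variables into the standard coordinates of $\{x + y + z^k = 0\}$. Since $l_{01} = l_{11} = 1$, $l_{21} = k$, and all $T_{ij}$ with $j \ne 1$ lie in $\LL$, inside $\LL[T_{01}, T_{11}, T_{21}]/(\g)$ the trinomial takes the form
$$\g = \alpha T_{01} + \beta T_{11} + \gamma T_{21}^k,$$
with $\alpha, \beta, \gamma \in \LL^\times$ equal to the products of the kernel variables raised to their exponents. As $\LL$ is algebraically closed, I pick $\tau \in \LL$ with $\tau^k = \gamma$ and set $x = \alpha T_{01}$, $y = \beta T_{11}$, $z = \tau T_{21}$. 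This yields an $\LL$-algebra isomorphism
$$\LL[T_{01}, T_{11}, T_{21}]/(\g) \xrightarrow{\sim} \LL[x, y, z]/(x + y + z^k).$$

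Next I would equip both sides with a compatible $\ZZ$-grading. On $\KK[T_{ij}]$ set $\rdeg T_{01} = \rdeg T_{11} = k$, $\rdeg T_{21} = 1$, and $\rdeg T_{ij} = 0$ for $j > 1$; then $\g$ is $\rdeg$-homogeneous of degree $k$, so $\rdeg$ descends to $\R$, and by Lemma~\ref{l_rdeg} the derivation $\delta$ is $\rdeg$-homogeneous. This grading extends to $\LL[T_{01}, T_{11}, T_{21}]/(\g)$ by placing all of $\LL$ in degree zero; since the extended $\delta$ is $\LL$-linear and acts on $T_{01}, T_{11}, T_{21}$ as before, it remains homogeneous. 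Under the rescaling above one has $\rdeg x = \rdeg y = k$ and $\rdeg z = 1$, which is exactly the fine grading used in Lemma~\ref{l_base}.

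Applying Lemma~\ref{l_base} over $\LL$ (its proof uses only that the base field is algebraically closed of characteristic zero, so it carries over verbatim) gives that $\delta(x), \delta(y), \delta(z^k)$ are $\LL$-proportional in $\LL[x, y, z]/(x + y + z^k)$. Since $\alpha, \beta, \tau \in \LL$ are annihilated by $\delta$, one has $\delta(x) = \alpha \delta(T_{01}) = \delta(\alpha T_{01}) = \delta(T_0^{l_0})$, and analogously $\delta(y) = \delta(T_1^{l_1})$ and $\delta(z^k) = \delta(T_2^{l_2})$. The desired $\LL$-proportionality then follows at once.

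I expect the only step requiring care to be the transport of homogeneity from the fine $K$-grading of $\R$ to a $\ZZ$-grading on the base-changed algebra that simultaneously (i) makes $\g$ homogeneous, (ii) is coarsened by the fine grading so that Lemma~\ref{l_rdeg} applies, and (iii) matches, after the $\LL$-rescaling, the grading demanded by Lemma~\ref{l_base}. Once this bookkeeping is in place, the reduction to Lemma~\ref{l_base} is immediate.
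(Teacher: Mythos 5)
Your proposal is correct and follows essentially the same route as the paper: the same rescaling of $T_{01}, T_{11}, T_{21}$ by kernel variables (with a $k$-th root of the coefficient of $T_{21}^k$) to reach $\{x+y+z^k=0\}$ over $\LL$, the same auxiliary $\ZZ$-grading $\rdeg T_{01}=\rdeg T_{11}=k$, $\rdeg T_{21}=1$ transported via Lemma~\ref{l_rdeg}, and the same application of Lemma~\ref{l_base} over $\LL$. The bookkeeping you flag as the delicate step is exactly what the paper's proof records, and your version handles it correctly.
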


\begin{proof}
After a linear coordinate transformation
\begin{gather*}
x = T_{01} \cdot T_{02}^{l_{02}} \ldots T_{0n_0}^{l_{0n_0}} = T_0^{l_0}\\
y = T_{11} \cdot T_{12}^{l_{12}} \ldots T_{1n_1}^{l_{1n_1}} = T_1^{l_1}\\
z = T_{21} \cdot \sqrt[k]{T_{22}^{l_{22}} \ldots T_{2n_2}^{l_{2n_2}}} = \sqrt[k]{T_2^{l_2}}
\end{gather*}
we obtain the hypersurface $\{x + y + z^k = 0\} \subseteq \LL^3$ and the corresponding locally nilpotent derivation $\pa$. 
Notice that
$$
\rdeg T_{01} = k, \quad \rdeg T_{11} = k, \quad \rdeg T_{21} = 1, \quad \rdeg T_{ij} = 0 \;\text{for all other $(i, j)$}
$$
is a well-defined $\ZZ$-grading on $\KK[T_{ij}]\,/\,(\g)$ since $\g$ is homogeneous (of degree $k$). 
By Lemma~\ref{l_rdeg}, $\delta$ is homogeneous with respect to this grading, 
whence $\pa$ is homogeneous with respect to
$$
\rdeg x = k, \quad \rdeg y = k, \quad \rdeg z = 1.
$$
Applying Lemma~\ref{l_base} to the derivation $\pa$ and the ground field $\LL$, we obtain that $\pa(x)$, $\pa(y)$, and $\pa(z^k)$ lie in a subspace of~$\LL[x, y, z]\,/\,(x+y+z^k)$ of dimension~$1$. 
For the derivation~$\delta$ it means that $\delta(T_0^{l_0})$, $\delta(T_1^{l_1})$, and $\delta(T_2^{l_2})$ lie in a subspace of~$\LL[T_{i1}]\,/\,(\g)$ of dimension~$1$. 
\end{proof}

Fix any $s, t \in \{0, 1, 2\}$, $s \ne t$. According to Lemma~\ref{lemma1}, there exists $\ell \in \LL$ such that
\begin{equation}
\label{lprop}
\delta(T_s^{l_s}) = \ell\, \delta(T_t^{l_t}) \;\text{ in } \LL[T_{i1}]\,/\,(\g).
\end{equation}
Then
$$
\label{rat_l}
\frac{\delta(T_s^{l_s})}{\delta(T_t^{l_t})} = \ell \;\text{ in }
Q\left(\LL[T_{i1}]\,/\,(\g)\right).
$$
On the other hand, $\deg \delta(T_s^{l_s}) = \deg \delta(T_t^{l_t})$. By Lemma~\ref{fracfield} it follows that
$$
\label{rat_frf}
\frac{\delta(T_s^{l_s})}{\delta(T_t^{l_t})} \in 
\KK\left(\frac{T_1^{l_1}}{T_2^{l_2}}\right) \;\text{ in }
Q\left(\KK[T_{ij}]\,/\,(\g)\right)_0 
\subseteq
Q\left(\LL[T_{i1}]\,/\,(\g)\right).
$$
Every element of $\KK\left(\dfrac{u}{v}\right)$ can be written in the form $\dfrac{f_1(u, v)}{f_2(u, v)}$, where $f_1, f_2 \in \KK[u, v]$. Whence 
\begin{align*}
\frac{f_1(T_1^{l_1}, T_2^{l_2})}{f_2(T_1^{l_1}, T_2^{l_2})} &= \ell 
&&\text{in }
Q\left(\LL[T_{i1}]\,/\,(\g)\right);
\\
f_1(T_1^{l_1}, T_2^{l_2}) - \ell f_2(T_1^{l_1}, T_2^{l_2}) &= 0
&&\text{in }
\LL[T_{i1}]\,/\,(\g);
\\
f_1(T_1^{l_1}, T_2^{l_2}) - \ell f_2(T_1^{l_1}, T_2^{l_2}) &= \g \cdot h
&&\text{in } \LL[T_{i1}] \text{ for some } h \in \LL[T_{i1}].
\\
\intertext{Since the left side does not depend on $T_{01}$,}
f_1(T_1^{l_1}, T_2^{l_2}) - \ell f_2(T_1^{l_1}, T_2^{l_2}) &= 0
&&\text{in } \LL[T_{i1}],
\end{align*}
which follows that $f_1$ and $f_2$ are proportional with coefficient $\ell$ as polynomials in two variables. 
At the same time $f_1, f_2$ are polynomials over $\KK$. 
Thus we get that $\ell \in \KK$ in~\eqref{lprop} and~\eqref{lprop} holds in $\KK[T_{ij}]\,/\,(\g)$. This concludes the proof of Proposition~\ref{pr_23m}.
\end{proof}

\section{Roots of trinomial algebras}
\label{sect_cor}

Let a quasitorus $H$ act on an algebraic variety~$X$. Consider the corresponding grading on~$\KK[X]$. 
Recall that a \textit{root} of~$X$ is the degree of a homogeneous locally nilpotent derivation of~$\KK[X]$. 

\begin{proposition}
\label{pr_roots}
Let $\R = \KK[\X]$ be a trinomial algebra.  
For any $a, b \in \{0, 1, 2\}$, $a \ne b$, $1 \le c_a \le n_a$, and $1 \le c_b \le n_b$ such that $l_{bc_b} = 1$, denote 
$$
E(T_{ac_a}, T_{bc_b}) = \biggl\{
\deg \g - \deg T_{ac_a} - \deg T_{bc_b} + \!\!\!
\sum \limits_{\substack{(i, j) \ne (a, c_a) \\ (i, j) \ne (b, c_b)}} 
\!\!\! u_{ij} \deg T_{ij} \;
\biggm| \; u_{ij} \in \Zgezero
\biggr\} \subseteq K.
$$
Then $e \in K$ is a root of~$\X$ if and only if $e \in \bigcup E(T_{ac_a}, T_{bc_b})$. 
\end{proposition}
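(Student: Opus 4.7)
The plan is to combine Theorem~\ref{theor} with Proposition~\ref{prop_ker} and carry out an explicit degree computation for the derivations of Construction~\ref{dCb_constr}. By Theorem~\ref{theor}, every finely homogeneous locally nilpotent derivation of~$\R$ is elementary, so the roots of~$\X$ are exactly the degrees of elementary derivations $\delta = h \dCb$ with $h \in \Ker \dCb$ homogeneous, and Proposition~\ref{prop_ker} gives the explicit form of such~$h$. The goal is therefore to compute $\deg(h \dCb)$ in both cases of Construction~\ref{dCb_constr} and match the outcome with the sets $E(T_{ac_a}, T_{bc_b})$.

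First I would compute $\deg \dCb$ itself. Since $\deg T_0^{l_0} = \deg T_1^{l_1} = \deg T_2^{l_2} = \deg \g$ in~$\R$, the partial derivative $\partial T_k^{l_k}/\partial T_{kc_k}$ has degree $\deg \g - \deg T_{kc_k}$. A direct calculation then yields
\[
\deg \dCb = \deg \g - \deg T_{ac_a} - \deg T_{bc_b} \quad \text{in case~\ref{dCbconstr_2}},
\]
with $\{a,b\} = \{0,1,2\} \setminus \{i_0\}$, and
\[
\deg \dCb = 2\deg \g - \deg T_{0c_0} - \deg T_{1c_1} - \deg T_{2c_2} \quad \text{in case~\ref{dCbconstr_1}}.
\]
By Proposition~\ref{prop_ker}, $\deg h = \sum_{T_{ij} \in \Ker \dCb} u_{ij} \deg T_{ij} + m \deg \g$.

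For the inclusion $\supseteq$, given any pair $(T_{ac_a}, T_{bc_b})$ with $l_{bc_b} = 1$ and any $u_{ij} \in \Zgezero$, I take $\dCb$ from case~\ref{dCbconstr_2} with $i_0 \in \{0,1,2\} \setminus \{a,b\}$ (the hypothesis $l_{bc_b} = 1$ is precisely what makes case~\ref{dCbconstr_2} applicable) and set $h = \prod_{(i,j) \ne (a,c_a),(b,c_b)} T_{ij}^{u_{ij}}$; all factors of $h$ lie in $\Ker \dCb$, and $\deg(h\dCb)$ equals the prescribed element of $E(T_{ac_a}, T_{bc_b})$.

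For the inclusion $\subseteq$, in case~\ref{dCbconstr_2} the factor $(\beta_1 T_0^{l_0} - \beta_0 T_1^{l_1})^m$ from Proposition~\ref{prop_ker} equals, up to a nonzero scalar, $(T_{i_0}^{l_{i_0}})^m$, a monomial in the kernel variables $T_{i_0 j}$; it is absorbed into the product, and the degree of $h\dCb$ lies manifestly in $E(T_{ac_a}, T_{bc_b})$. In case~\ref{dCbconstr_1} the case hypothesis forces at least two of $l_{0c_0}, l_{1c_1}, l_{2c_2}$ to equal~$1$; say $l_{0c_0} = l_{1c_1} = 1$. I then use $\deg T_{0c_0} = \deg \g - \sum_{j \ne c_0} l_{0j} \deg T_{0j}$ to convert one copy of $\deg \g$ into a non-negative combination of the $\deg T_{0j}$, and write the remaining $m \deg \g$ as $m \sum_j l_{0j} \deg T_{0j}$, so that the total lies in $E(T_{1c_1}, T_{2c_2})$. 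The main obstacle I anticipate is purely the bookkeeping in case~\ref{dCbconstr_1}: I must verify that the rewriting produces only $\Zgezero$-coefficients on variables $T_{ij}$ whose index differs from the two designated ones, which is ensured by the fact that expanding $\deg \g$ via $\deg T_0^{l_0}$ involves only the index $i = 0$, so neither $T_{1c_1}$ nor $T_{2c_2}$ enters the rewriting.
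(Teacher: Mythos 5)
Your proposal is correct and follows essentially the same route as the paper: Theorem~\ref{theor} plus Proposition~\ref{prop_ker} reduce the problem to computing $\deg(h\dCb)$, your formulas for $\deg\dCb$ in cases~\ref{dCbconstr_1} and~\ref{dCbconstr_2} agree with the paper's, and the identification of $(\beta_1T_0^{l_0}-\beta_0T_1^{l_1})^m$ with $(T_{i_0}^{l_{i_0}})^m$ up to scalar is exactly the paper's observation. The only cosmetic difference is in absorbing Type~I: you rewrite the degree arithmetically via $\deg\g=\sum_j l_{0j}\deg T_{0j}$ with $l_{0c_0}=1$, whereas the paper constructs an explicit Type~II derivation $T_{i_0}^{l_{i_0}}/T_{i_0c_{i_0}}\cdot\delta_{C',\beta'}$ of the same degree --- the underlying identity is the same.
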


\begin{proof}
According to Theorem~\ref{theor}, we have to find the degrees of elementary derivations. Note that 
$$
\deg \dCb = 
\begin{cases}
2 \deg \g - \sum \limits_{i} \deg T_{ic_i} 
& \text{in case (i) of Construction~\ref{dCb_constr}}\\
\deg \g - \sum \limits_{i \ne i_0} \deg T_{ic_i}
 & \text{in case (ii) of Construction~\ref{dCb_constr}}
\end{cases}
$$
Every elementary derivation $\delta$ has the form $h \dCb$, where $h$ is a homogeneous element of~$\Ker \dCb$. 
By Proposition~\ref{prop_ker}, 
$\deg h = \!\!\! \sum \limits_{T_{ij} \in \Ker \dCb} \!\!\! u_{ij} \deg T_{ij} + m \deg \g$ for some $u_{ij}, m \in \Zgezero$. Hence
\begin{equation}
\label{degdelta}
\deg \delta = 
\begin{cases}
2 \deg \g - \sum \limits_{i} \deg T_{ic_i} +
\!\!\! \sum \limits_{T_{ij} \in \Ker \dCb} \!\!\! u_{ij} \deg T_{ij} + m \deg \g 
& \text{in case (i)}\\
\deg \g - \sum \limits_{i \ne i_0} \deg T_{ic_i} +
\!\!\! \sum \limits_{T_{ij} \in \Ker \dCb} \!\!\! u_{ij} \deg T_{ij} + m \deg \g
 & \text{in case (ii)}
\end{cases}
\end{equation}
We claim that for any elementary $\delta = h \dCb$ of Type~I there is $\delta\pr = h\pr \delta_{C\pr, \beta\pr}$ of Type~II with $\deg \delta = \deg \delta\pr$. 
Indeed, for any $C = (c_0, c_1, c_2)$, $\beta$ satisfying the conditions of case~(i), 
take any $i_0 \in \{0, 1, 2\}$ and $\beta\pr = (\beta_0\pr, \beta_1\pr, \beta_2\pr) \ne 0$ with $\beta_{i_0}\pr = 0$, $\beta_0\pr + \beta_1\pr + \beta_2\pr = 0$. 
Then the data $C\pr = C$, $\beta\pr$ satisfies the conditions of case~(ii), and for $\delta_1 = T_{i_0}^{l_{i_0}} / T_{i_0c_{i_0}} \cdot \delta_{C\pr, \beta\pr}$ we have 
$$
\deg \delta_1
= \deg \g - \deg T_{i_0c_{i_0}} + \deg \g - \sum \limits_{i \ne i_0} \deg T_{ic_i} 
= 2 \deg \g - \sum \limits_{i} \deg T_{ic_i} = \deg \dCb.
$$
Note that $T_{i_0}^{l_{i_0}} / T_{i_0c_{i_0}}$, 
$T_{i_0}^{l_{i_0}}$, and $T_{ij} \in \Ker \dCb$ belong to 
$\Ker \delta_{C\pr, \beta\pr}$. 
Replacing the factor of degree $m \deg \g$ in $h$ by $\left(T_{i_0}^{l_{i_0}}\right)^m$, we get the element $h_1 \in \Ker \delta_{C\pr, \beta\pr}$ of degree $\deg h$. 
Then $\delta\pr = h_1 \delta_1$ is elementary of Type~II and $\deg \delta\pr = \deg \delta$, as required. 

It remains to note that $\deg \g = \sum \limits_{j} l_{i_0j} \deg T_{i_0j}$ and set $\{a, b\} = \{0, 1, 2\} \setminus \{i_0\}$. 
\end{proof}

We call a set of the form $E(T_{ac_a}, T_{bc_b})$ a \textit{basic set}. 

\begin{corollary}
\label{cor_derex}
For any $e \in E(T_{ac_a}, T_{bc_b})$ there exists locally nilpotent derivation $\delta$ with non-kernel variables $T_{ac_a}$, $T_{bc_b}$ and $\deg \delta = e$. 
\end{corollary}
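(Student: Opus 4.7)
The plan is to realize $e$ as the degree of an explicit elementary derivation of Type~II, reading the construction directly off the degree computation carried out in the proof of Proposition~\ref{pr_roots}. Given $e \in E(T_{ac_a}, T_{bc_b})$, I first fix a representation
$e = \deg \g - \deg T_{ac_a} - \deg T_{bc_b} + \sum_{(i,j)\notin\{(a,c_a),(b,c_b)\}} u_{ij}\,\deg T_{ij}$
with $u_{ij}\in\Zgezero$. Let $i_0$ be the unique index in $\{0,1,2\}\setminus\{a,b\}$ and complete $c_a, c_b$ to a triple $C=(c_0,c_1,c_2)$ by picking any $c_{i_0}\in\{1,\dots,n_{i_0}\}$. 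Take $\beta=(\beta_0,\beta_1,\beta_2)$ with $\beta_{i_0}=0$, $\beta_a=1$, $\beta_b=-1$. The hypothesis $l_{bc_b}=1$ ensures that at most one index $i_1\neq i_0$ (namely $i_1=a$) has $l_{i_1c_{i_1}}>1$, so the data $(C,\beta)$ fall under case~\ref{dCbconstr_2} of Construction~\ref{dCb_constr} and $\dCb$ is a well-defined finely homogeneous locally nilpotent derivation of~$\R$.

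Next I would check the non-kernel variables of $\dCb$ are exactly $T_{ac_a}$ and $T_{bc_b}$: the formulas of Construction~\ref{dCb_constr} give $\dCb(T_{ij})=0$ whenever $j\neq c_i$ or $i=i_0$, whereas for $i\in\{a,b\}$ and $j=c_i$ the value is $\beta_i$ times a nonzero monomial, which is nonzero in~$\R$ since $\g$ is irreducible. The degree computation from the proof of Proposition~\ref{pr_roots} specialises to $\deg\dCb=\deg\g-\deg T_{ac_a}-\deg T_{bc_b}$. To match the remaining part of $e$, set $h:=\prod_{(i,j)\notin\{(a,c_a),(b,c_b)\}} T_{ij}^{u_{ij}}$. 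Every factor lies in $\Ker\dCb$ (either $j\neq c_i$, or $(i,j)=(i_0,c_{i_0})$ with $\beta_{i_0}=0$), so $h\in\Ker\dCb$ as the kernel is a subalgebra. The derivation $\delta:=h\dCb$ is then elementary of Type~II, hence locally nilpotent by Proposition~\ref{prop_ker} and the subsequent remarks; since $\R$ is a domain and $h\neq 0$, its non-kernel variables coincide with those of $\dCb$, namely $T_{ac_a}$ and $T_{bc_b}$, and $\deg\delta=\deg h+\deg\dCb=e$.

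The main ``obstacle'' is purely bookkeeping: verifying that $l_{bc_b}=1$ licenses case~\ref{dCbconstr_2} for this $(C,\beta)$, and that every variable appearing in $h$---in particular the variables $T_{i_0 j}$---is genuinely killed by $\dCb$. No deeper input is required, because the set $E(T_{ac_a},T_{bc_b})$ has been engineered to list exactly the degrees produced by Type~II elementary derivations with prescribed non-kernel variables~$T_{ac_a}$ and~$T_{bc_b}$; the corollary is really a converse to that degree computation.
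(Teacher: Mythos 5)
Your proof is correct and follows essentially the same route as the paper: the paper's one-line proof invokes formula~\eqref{degdelta} (specialized to case~(ii) with $m=0$) together with Proposition~\ref{prop_ker}, and your argument simply makes that explicit by constructing the Type~II derivation $h\,\dCb$ with $\beta_{i_0}=0$ and $h$ a monomial in kernel variables. The bookkeeping you flag (that $l_{bc_b}=1$ licenses case~\ref{dCbconstr_2} and that all factors of $h$ lie in $\Ker\dCb$) is handled correctly.
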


\begin{proof}
The statement follows from~\eqref{degdelta} and Proposition~\ref{prop_ker}. 
\end{proof}

The following lemma states that the degree of a locally nilpotent derivation determines a unique variable in every monomial~$T_i^{l_i}$ that can be non-kernel. 

\begin{lemma}
\label{lem_1root}
Let $\deg \delta = \deg \delta'$, $T_{ic} \notin \Ker \delta$, $T_{ic'} \notin \Ker \delta'$. Then $c = c'$. 
\end{lemma}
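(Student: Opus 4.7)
The plan is to argue by contradiction, mimicking the auxiliary-grading technique used in the proofs of Lemma \ref{3_lemma} and Proposition \ref{pr_01m'}. Suppose $c \ne c'$. I would introduce the $\ZZ$-grading on $\KK[T_{ij}]$ defined by
$$
\rdeg T_{ic} = l_{ic'}, \quad \rdeg T_{ic'} = -l_{ic}, \quad \rdeg T_{kl} = 0 \text{ for all other } (k,l).
$$
Since $\rdeg T_i^{l_i} = l_{ic}l_{ic'} + l_{ic'}(-l_{ic}) = 0$ and $\rdeg T_j^{l_j}=0$ for $j\ne i$, the trinomial $\g$ is $\rdeg$-homogeneous of degree $0$, so the grading descends to $\R$. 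By Lemma \ref{l_rdeg}, both $\delta$ and $\delta'$ are $\rdeg$-homogeneous; moreover, since $\rdeg$ factors through $\deg$, the equality $\deg\delta=\deg\delta'$ forces $\rdeg\delta=\rdeg\delta'$. Denote this common integer by $r$.

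Now I would exploit the fact that $T_{ic}$ is the unique variable of positive $\rdeg$ and $T_{ic'}$ is the unique variable of negative $\rdeg$. Consequently any $\rdeg$-homogeneous polynomial of positive $\rdeg$ is divisible by $T_{ic}$, and any of negative $\rdeg$ is divisible by $T_{ic'}$. Consider $\delta(T_{ic})$: it is nonzero by hypothesis and has $\rdeg$ equal to $r+l_{ic'}$. If $r+l_{ic'}>0$, then picking a $\rdeg$-homogeneous representative in $\KK[T_{ij}]$ (which exists because $(\g)$ is $\rdeg$-homogeneous) shows that $T_{ic}$ divides $\delta(T_{ic})$ in $\R$, contradicting Lemma \ref{fr_lem}\ref{fr_lem_f}. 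Hence $r\le -l_{ic'}$. Applying the symmetric argument to $\delta'(T_{ic'})\ne 0$, which has $\rdeg$ equal to $r-l_{ic}$, the possibility $r-l_{ic}<0$ would give $T_{ic'}\mid\delta'(T_{ic'})$, again contradicting Lemma \ref{fr_lem}\ref{fr_lem_f}; thus $r\ge l_{ic}$.

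Combining the two inequalities gives $l_{ic}\le r\le -l_{ic'}$, which is impossible since $l_{ic},l_{ic'}\ge 1$. This contradiction establishes $c=c'$.

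The only point that requires minor care is the passage from divisibility in $\KK[T_{ij}]$ to divisibility in $\R$. This is handled by the standard observation that, since $(\g)$ is a $\rdeg$-homogeneous ideal, every $\rdeg$-homogeneous element of $\R$ lifts to a $\rdeg$-homogeneous polynomial of the same degree; once such a lift is chosen, the monomial analysis takes place entirely in the polynomial ring and transfers back to $\R$. No further subtleties arise, so this is essentially a one-page argument.
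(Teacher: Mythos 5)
Your argument is correct and coincides with the paper's own proof: the same auxiliary $\ZZ$-grading $\rdeg T_{ic}=l_{ic'}$, $\rdeg T_{ic'}=-l_{ic}$, the same observation that a homogeneous element of positive (resp.\ negative) $\rdeg$ is divisible by $T_{ic}$ (resp.\ $T_{ic'}$), and the same appeal to Lemma~\ref{fr_lem}\ref{fr_lem_f}. The only cosmetic difference is that you extract the sharper bounds $l_{ic}\le r\le -l_{ic'}$, whereas the paper simply concludes $\rdeg\delta<0$ and $\rdeg\delta'>0$; the contradiction is the same.
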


\begin{proof}
Suppose that $c \ne c'$. Without loss of generality we can assume that $c = 1$, $c' = 2$, $i = 0$. 
Consider the following $\ZZ$-grading on the algebra~$\KK[T_{ij}]$:
$$
\rdeg T_{01} = l_{02}, \quad \rdeg T_{02} = -l_{01}, \quad \rdeg T_{ij} = 0 \;\text{ for all other $(i, j)$.}
$$
The trinomial $\g$ is homogeneous (of degree $0$) with respect to this grading, therefore the \hbox{$\rdeg$-grading} is a well-defined grading on the factor algebra~$\R$. 
By Lemma~\ref{l_rdeg}, it follows that $\delta$ and $\delta'$ are $\rdeg$-homogeneous and $\rdeg \delta = \rdeg \delta'$. 

Let $\rdeg \delta \ge 0$, then $\rdeg \delta(T_{01}) > 0$. 
Note that $T_{01}$ is a unique variable with a positive degree. 
Hence every monomial in $\delta(T_{01})$ includes $T_{01}$ and therefore $T_{01}$ divides $\delta(T_{01})$. But $\delta(T_{01}) \ne 0$ since $T_{01}$ is non-kernel. This contradicts Lemma~\ref{fr_lem}\ref{fr_lem_f}. 

Thus, $\rdeg \delta < 0$.
By the same reason, $\rdeg \delta' > 0$, but they are equal, a contradiction. 
\end{proof}

\begin{proposition}
\label{pr_3roots}
Every $e \in K$ belongs to at most three basic sets. 
\end{proposition}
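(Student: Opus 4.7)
The plan is to argue that any $e \in K$ lies in at most one basic set for each unordered pair of slots $\{a, b\} \subseteq \{0, 1, 2\}$; since there are only $\binom{3}{2} = 3$ such pairs, the bound follows. The starting observation is that the defining formula of $E(T_{ac_a}, T_{bc_b})$ is symmetric in swapping $(a, c_a) \leftrightarrow (b, c_b)$, so, as a subset of $K$, a basic set is determined by the unordered pair $\{(a, c_a), (b, c_b)\}$. In particular, it is meaningful to count basic sets containing $e$ one slot-pair $\{a, b\}$ at a time.

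Next I would fix an unordered slot-pair $\{a, b\} \subseteq \{0, 1, 2\}$ with $a \ne b$ and show that $e$ lies in at most one basic set with this slot-pair. Suppose instead that $e \in E(T_{a c_a}, T_{b c_b}) \cap E(T_{a c_a'}, T_{b c_b'})$. By Corollary~\ref{cor_derex} there exist locally nilpotent derivations $\delta, \delta'$ of $\R$ of degree $e$ with $T_{a c_a}, T_{b c_b} \notin \Ker \delta$ and $T_{a c_a'}, T_{b c_b'} \notin \Ker \delta'$. Applying Lemma~\ref{lem_1root} to the pair $(\delta, \delta')$ first with $i = a$ and then with $i = b$ forces $c_a = c_a'$ and $c_b = c_b'$, so the two basic sets coincide. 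Summing over the three slot-pairs gives the desired bound.

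The only step that might feel nontrivial is ensuring that the non-kernel hypotheses of Lemma~\ref{lem_1root} really hold in both slots simultaneously for the derivations under comparison; but this is built into the conclusion of Corollary~\ref{cor_derex}, so no genuine obstacle arises. The heart of the argument is simply that Corollary~\ref{cor_derex} together with Lemma~\ref{lem_1root} force the indices $c_a$ and $c_b$ of the non-kernel variables in any realization of $e$ to be determined by the slot-pair $\{a, b\}$ alone.
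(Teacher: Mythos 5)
Your proof is correct and follows essentially the same route as the paper: both arguments combine Corollary~\ref{cor_derex} (to realize $e$ as the degree of a locally nilpotent derivation with prescribed non-kernel variables) with Lemma~\ref{lem_1root} (to see that $e$ determines the index $c_i$ of the possible non-kernel variable in each monomial $T_i^{l_i}$), leaving only the three slot-pairs $\{0,1\}$, $\{0,2\}$, $\{1,2\}$. Your bookkeeping per unordered slot-pair, and your explicit remark that $E(T_{ac_a},T_{bc_b})$ depends only on the unordered pair, is merely a slightly more careful organization of the same argument.
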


\begin{proof}
Let $e \in E(T_{ac_a}, T_{bc_b})$. 
According to Corollary~\ref{cor_derex}, there exists locally nilpotent derivation $\delta$ with $\deg \delta = e$ and $T_{ac_a}, T_{bc_b} \notin \Ker \delta$. 
By Lemma~\ref{lem_1root} there is at most one variable $T_{ic_i}$ in every monomial $T_i^{l_i}$ that can be non-kernel for some locally nilpotent derivation with the degree $e$. 
Then $e$ can belong only to $E(T_{0c_0}, T_{1c_1}), E(T_{0c_0}, T_{2c_2}), E(T_{1c_1}, T_{2c_2})$.
\end{proof}

\begin{remark}
\label{rem_TI}
It follows from the proof of Proposition~\ref{pr_roots} that any degree of a derivation of Type~I belongs to all three possible basic sets. The converse is false, see Example~\ref{ex_pyu1}.
\end{remark}

In \cite{Ko2014_eng}, homogeneous locally nilpotent derivations for quadrics with a complexity one torus action are described. 
This description is obtained by applying the technique of proper polyhedral divisors (see~\cite{Li12010} and~\cite{Li22010}). 
We obtain the following statements from Theorem~\ref{theor} and Proposition~\ref{prop_ker} (Examples~\ref{ex_pyu2},~\ref{ex_pyu1}).

\begin{example}
\label{ex_pyu2}
Every finely homogeneous locally nilpotent derivation of the algebra~$\R$, where 
$\g = T_{01}T_{02} + T_{11}T_{12} + T_{21}T_{22}$, has the form 
\begin{gather*}
T_{0i_0}^{k_0} T_{1i_1}^{k_1} T_{2i_2}^{k_2} (\alpha_2 T_{11}T_{12} - \alpha_1 T_{21}T_{22})^p 
\left(\alpha_0 T_{1i_1} T_{2i_2} \dfrac{\partial}{\partial T_{0\bar i_0}} 
+ \alpha_1 T_{0i_0} T_{2i_2} \dfrac{\partial}{\partial T_{1\bar i_1}} 
+ \alpha_2 T_{0i_0} T_{1i_1} \dfrac{\partial}{\partial T_{2\bar i_2}} 
\right), 
\end{gather*}
where $k_0, k_1, k_2, p \in \Zgezero$, $\{i_0, \bar i_0\} = \{i_1, \bar i_1\} = \{i_2, \bar i_2\} = \{1, 2\}$,  
$\alpha_0, \alpha_1, \alpha_2 \in \KK$, and \hbox{$\alpha_0 + \alpha_1 + \alpha_2 = 0$}. 
\end{example}

\begin{example}
\label{ex_pyu1}
Every finely homogeneous locally nilpotent derivation of the algebra~$\R$, where 
$\g = T_{01}T_{02} + T_{11}T_{12} + T_{21}^2$, has the form 
\begin{gather*}
\lambda T_{0i}^k T_{1j}^l T_{21}^p 
\left(T_{1j} \dfrac{\partial}{\partial T_{1\bar i}} - T_{0i} \dfrac{\partial}{\partial T_{1\bar j}}\right) \quad\text{or} 
\\
T_{0i}^k T_{1j}^l (\alpha_1 T_{01}T_{02} - \alpha_0 T_{11}T_{12})^p 
\left(\alpha_0 T_{1j} T_{21} \dfrac{\partial}{\partial T_{0\bar i}} 
+ \alpha_1 T_{0i} T_{21} \dfrac{\partial}{\partial T_{1\bar j}}
- \dfrac{\alpha_0 + \alpha_1}{2} T_{0i} T_{1j} \dfrac{\partial}{\partial T_{21}}
\right), 
\end{gather*}
where $k, l, p \in \Zgezero$, $\{i, \bar i\} = \{j, \bar j\} = \{1, 2\}$,  $\alpha_0, \alpha_1, \lambda \in \KK$. 

\begin{remark}
There is no factor $(T_{01}T_{02} + T_{11} T_{12})^k$ in the first case since it can be written as $(-1)^k T_{21}^{2k}$. 
\end{remark}
\begin{figure}[ht]
    \centering
	\newcounter{x}
	\newcounter{y}
    \newcounter{z}
    \newcounter{xy}
    \newcounter{xyz}

\begin{minipage}{.5\textwidth} 
\centering
\begin{tikzpicture}
    \pgftransformcm{0.4}{0}{0}{0.4}{\pgfpoint{0cm}{0cm}}
    \tikzstyle{conefill} = [fill=blue!20,fill opacity=0.7, draw = black!70]

    \coordinate (Btwo) at (1.4,0);
    \coordinate (Bthr) at (0,1.4);
    \coordinate (Bone) at (-1.2,-0.6);

    \coordinate (YAxisMin) at ($-4*(Btwo)$);
    \coordinate (YAxisMax) at ($4*(Btwo)$);
    \coordinate (ZAxisMin) at ($-3*(Bthr)$);
    \coordinate (ZAxisMax) at ($7*(Bthr)$);
    \coordinate (XAxisMin) at ($-5*(Bone)$);
    \coordinate (XAxisMax) at ($5*(Bone)$);

    \draw [thick, black,-latex] (YAxisMin) -- (YAxisMax) node [below] {$y$};
    \draw [thick, black,-latex] (XAxisMin) -- (XAxisMax) node [below right] {$x$};
    \draw [thick, black,-latex] (ZAxisMin) -- (ZAxisMax) node [below right] {$z$};
    \draw [-latex] (0, 0) -- (Btwo) -- ($(Bone)+(Btwo)$) -- (Bone) -- cycle;

\filldraw[conefill] ($(Bone)+(Btwo)$) -- ($(Bone)+(Btwo)+4*(Bthr)$) -- ($5*(Bone)+(Btwo)+4*(Bthr)$) -- cycle;
\filldraw[conefill] ($(Bone)+(Btwo)$) -- ($(Bone)+(Btwo)+4*(Bthr)$) -- ($(Bone)+5*(Btwo)+4*(Bthr)$) -- cycle;

  \foreach \z in {1,...,6}{
    \forloop{xy}{1}{\value{xy} < \z}{
      \forloop{y}{1}{\value{y} < \value{xy}}{
       \node[draw,circle,blue!40, inner sep=1.5pt,fill] at ($\value{xy}*(Bone)-\value{y}*(Bone)+\value{y}*(Btwo)+\z*(Bthr)-2*(Bthr)$) {}; 
      }
    }
  }

\filldraw[conefill] ($(Bone)+(Btwo)$) -- ($5*(Bone)+(Btwo)+4*(Bthr)$) -- ($(Bone)+5*(Btwo)+4*(Bthr)$) -- cycle;

\foreach \x in {1,...,6}{
  \forloop{xy}{\x}{\value{xy} < 6}{
    \node[draw,circle, blue!40, inner sep=1.5pt,fill] at ($\x*(Bone)+\value{xy}*(Btwo)-\x*(Btwo)+(Btwo)+\value{xy}*(Bthr)-(Bthr)$) {}; 
  }
}
\label{cone_fig}[1]
\end{tikzpicture}
\caption{A~<<cone>> $E(T_{01}, T_{11})$}
\end{minipage}%
\centering
\begin{minipage}{.5\textwidth} 
\centering
\begin{tikzpicture}
    \pgftransformcm{0.4}{0}{0}{0.4}{\pgfpoint{0cm}{0cm}}
    \tikzstyle{conefill} = [fill=red!20,fill opacity=0.7, draw = black!70]

    \coordinate (Btwo) at (1.4,0);
    \coordinate (Bthr) at (0,1.4);
    \coordinate (Bone) at (-1.2,-0.6);
    \coordinate (Or) at (0,0);
    \coordinate (YAxisMin) at ($-4*(Btwo)$);
    \coordinate (YAxisMax) at ($4*(Btwo)$);
    \coordinate (ZAxisMin) at ($-3*(Bthr)$);
    \coordinate (ZAxisMax) at ($7*(Bthr)$);
    \coordinate (XAxisMin) at ($-5*(Bone)$);
    \coordinate (XAxisMax) at ($5*(Bone)$);

    \draw [thick, black,-latex] (YAxisMin) -- (YAxisMax) node [below] {$y$};
    \draw [thick, black,-latex] (XAxisMin) -- (XAxisMax) node [below right] {$x$};
    \draw [thick, black,-latex] (ZAxisMin) -- (ZAxisMax) node [below right] {$z$};
    \draw [-latex] ($-1*(Bone)-(Btwo)$) -- ($-1*(Bone)+(Btwo)$) -- ($(Bone)+(Btwo)$) -- ($(Bone)-(Btwo)$) -- cycle;

\filldraw[conefill] ($(Bone)$) -- ($(Bone)+4*(Btwo)+4*(Bthr)$) -- ($(Bone)-4*(Btwo)+4*(Bthr)$) -- cycle;
\filldraw[conefill] ($(Bone)$) -- ($(Bone)-4*(Btwo)+4*(Bthr)$) -- ($5*(Bone)+4*(Bthr)$) -- cycle;

  \foreach \xyz in {0,1,...,6}{
    \forloop{xy}{0}{\value{xy} < \xyz}{
      \forloop{x}{0}{\value{x} < \value{xy}}{
       \node[draw,circle,red, inner sep=0.5pt,fill] at 
($5*(Bthr)
+\value{x}*(Bone)+(Bone)
+\value{xy}*(Btwo)-\value{x}*(Btwo)-(Btwo)
-\xyz*(Bthr)+\value{xy}*(Bthr)$) {}; 
             \node[draw,circle,red, inner sep=0.5pt,fill] at 
($5*(Bthr)
+\value{x}*(Bone)+(Bone)
-\value{xy}*(Btwo)+\value{x}*(Btwo)+(Btwo)
-\xyz*(Bthr)+\value{xy}*(Bthr)$) {}; 

      }
    }
  }

  \foreach \xyz in {0,2,...,6}{
    \forloop{xy}{0}{\value{xy} < \xyz}{
      \forloop{x}{0}{\value{x} < \value{xy}}{
       \node[draw,circle,red, inner sep=1.5pt,fill] at 
($5*(Bthr)
+\value{x}*(Bone)+(Bone)
+\value{xy}*(Btwo)-\value{x}*(Btwo)-(Btwo)
-\xyz*(Bthr)+\value{xy}*(Bthr)$) {}; 
             \node[draw,circle,red, inner sep=1.5pt,fill] at 
($5*(Bthr)
+\value{x}*(Bone)+(Bone)
-\value{xy}*(Btwo)+\value{x}*(Btwo)+(Btwo)
-\xyz*(Bthr)+\value{xy}*(Bthr)$) {}; 
      }
    }
  }

\filldraw[conefill] ($(Bone)$) -- ($(Bone)+4*(Btwo)+4*(Bthr)$) -- ($5*(Bone)+4*(Bthr)$) -- cycle;

\foreach \x in {1,...,6}{
  \forloop{xy}{\x}{\value{xy} < 6}{
    \node[draw,circle, red, inner sep=1.5pt,fill] at ($\x*(Bone)+\value{xy}*(Btwo)-\x*(Btwo)+\value{xy}*(Bthr)-(Bthr)$) {}; 
  }
}
\end{tikzpicture}
\caption{A~<<cone>> $E(T_{01}, T_{21})$}
\end{minipage}
\begin{minipage}{.28\textwidth} 
\centering
\begin{tikzpicture}
    \pgftransformcm{0.4}{0}{0}{0.4}{\pgfpoint{0cm}{0cm}}
    \tikzstyle{conefill} = [fill=blue!20,fill opacity=0.7, draw = black!70]

    \coordinate (Btwo) at (1.4,0);
    \coordinate (Bthr) at (0,1.4);
    \coordinate (Bone) at (-1.2,-0.6);
    \coordinate (Or) at (0,0);
    \coordinate (YAxisMin) at ($-3*(Btwo)$);
    \coordinate (YAxisMax) at ($3*(Btwo)$);
    \coordinate (ZAxisMin) at ($-3*(Bthr)$);
    \coordinate (ZAxisMax) at ($7*(Bthr)$);
    \coordinate (XAxisMin) at ($-3*(Bone)$);
    \coordinate (XAxisMax) at ($5*(Bone)$);

    \draw [thick, black,-latex] (YAxisMin) -- (YAxisMax) node [below] {$y$};
    \draw [thick, black,-latex] (XAxisMin) -- (XAxisMax) node [below right] {$x$};
    \draw [thick, black,-latex] (ZAxisMin) -- (ZAxisMax) node [below right] {$z$};
    \draw [-latex] ($-1*(Bone)-(Btwo)$) -- ($-1*(Bone)+(Btwo)$) -- ($(Bone)+(Btwo)$) -- ($(Bone)-(Btwo)$) -- cycle;

\draw[red!40,-latex, -] (Bone) -- ($(Bone)+4*(Bthr)$) -- ($5*(Bone)+4*(Bthr)$) -- cycle;
\foreach \z in {1,...,6}{
  \forloop{x}{1}{\value{x} < \z}{
    \node[draw,circle,red, inner sep=0.5pt,fill] at ($\value{x}*(Bone)+\z*(Bthr)-2*(Bthr)$) {}; 
  }
}
\foreach \x in {1,...,6}{
 \forloop[2]{z}{\x}{\value{z} < 6}{
        \node[draw,circle,red, inner sep=1.5pt,fill] at ($\x*(Bone)+\value{z}*(Bthr)-(Bthr)$) {};
  }
}
\end{tikzpicture}
\label{layer_fig}
\caption{A~<<layer>>}
\end{minipage}%
\begin{minipage}{.44\textwidth} 
\centering
\begin{tikzpicture}
    \pgftransformcm{0.4}{0}{0}{0.4}{\pgfpoint{0cm}{0cm}}
    \tikzstyle{conefill} = [fill=blue!20,fill opacity=0.7, draw = black!70]

    \coordinate (Btwo) at (1.4,0);
    \coordinate (Bthr) at (0,1.4);
    \coordinate (Bone) at (-1.2,-0.6);
    
    \coordinate (Or) at (0,0);
    \coordinate (YAxisMin) at ($-5*(Btwo)$);
    \coordinate (YAxisMax) at ($5*(Btwo)$);
    \coordinate (ZAxisMin) at ($-3*(Bthr)$);
    \coordinate (ZAxisMax) at ($7*(Bthr)$);
    \coordinate (XAxisMin) at ($-5*(Bone)$);
    \coordinate (XAxisMax) at ($5*(Bone)$);

    \draw [thick, black,-latex] (YAxisMin) -- (YAxisMax) node [below] {$y$};
    \draw [thick, black,-latex] (XAxisMin) -- (XAxisMax) node [below right] {$x$};
    \draw [-latex] ($-1*(Bone)-(Btwo)$) -- ($-1*(Bone)+(Btwo)$) -- ($(Bone)+(Btwo)$) -- ($(Bone)-(Btwo)$) -- cycle;

\filldraw[conefill] ($-1*(Bone)-(Btwo)$) -- ($-5*(Bone)-(Btwo)+4*(Bthr)$) -- ($-1*(Bone)-5*(Btwo)+4*(Bthr)$) -- cycle;
\filldraw[conefill] ($-1*(Bone)-(Btwo)$) --  ($-1*(Bone)-(Btwo)+4*(Bthr)$) -- ($-5*(Bone)-(Btwo)+4*(Bthr)$) -- cycle;
\filldraw[conefill] ($-1*(Bone)-(Btwo)$) -- ($-1*(Bone)-(Btwo)+4*(Bthr)$) -- ($-1*(Bone)-5*(Btwo)+4*(Bthr)$) -- cycle;

\draw[red!40,-latex, -] ($-1*(Bone)$) -- ($-1*(Bone)+4*(Bthr)$) -- ($-5*(Bone)+4*(Bthr)$) -- cycle;
\foreach \x in {1,...,6}{
 \forloop[2]{z}{\x}{\value{z} < 6}{
        \node[draw,circle,red, inner sep=1.5pt,fill] at ($-\x*(Bone)+\value{z}*(Bthr)-(Bthr)$) {};
  }
}

\filldraw[conefill] ($-1*(Bone)+(Btwo)$) -- ($-5*(Bone)+(Btwo)+4*(Bthr)$) -- ($-1*(Bone)+5*(Btwo)+4*(Bthr)$) -- cycle;
\filldraw[conefill] ($-1*(Bone)+(Btwo)$) --  ($-1*(Bone)+(Btwo)+4*(Bthr)$) -- ($-5*(Bone)+(Btwo)+4*(Bthr)$) -- cycle;
\filldraw[conefill] ($-1*(Bone)+(Btwo)$) -- ($-1*(Bone)+(Btwo)+4*(Bthr)$) -- ($-1*(Bone)+5*(Btwo)+4*(Bthr)$) -- cycle;

\draw[teal!60,-latex, -] (Btwo) -- ($(Btwo)+4*(Bthr)$) -- ($5*(Btwo)+4*(Bthr)$) -- cycle;
\foreach \y in {1,...,6}{
 \forloop[2]{z}{\y}{\value{z} < 6}{
        \node[draw,circle,teal, inner sep=1.5pt,fill] at ($\y*(Btwo)+\value{z}*(Bthr)-(Bthr)$) {};
  }
}

\draw[teal!60,-latex, -] ($-1*(Btwo)$) -- ($-1*(Btwo)+4*(Bthr)$) -- ($-5*(Btwo)+4*(Bthr)$) -- cycle;
\foreach \y in {1,...,6}{
 \forloop[2]{z}{\y}{\value{z} < 6}{
        \node[draw,circle,teal, inner sep=1.5pt,fill] at ($-\y*(Btwo)+\value{z}*(Bthr)-(Bthr)$) {};
  }
}

    \draw [thick, black,-latex] (ZAxisMin) -- (ZAxisMax) node [below right] {$z$};

\filldraw[conefill] ($(Bone)-(Btwo)$) -- ($(Bone)-(Btwo)+4*(Bthr)$) -- ($(Bone)-5*(Btwo)+4*(Bthr)$) -- cycle;
\filldraw[conefill] ($(Bone)-(Btwo)$) -- ($5*(Bone)-(Btwo)+4*(Bthr)$) -- ($(Bone)-5*(Btwo)+4*(Bthr)$) -- cycle;
\filldraw[conefill] ($(Bone)-(Btwo)$) -- ($(Bone)-(Btwo)+4*(Bthr)$) -- ($5*(Bone)-(Btwo)+4*(Bthr)$) -- cycle;

\draw[red!40,-latex, -] (Bone) -- ($(Bone)+4*(Bthr)$) -- ($5*(Bone)+4*(Bthr)$) -- cycle;
\foreach \x in {1,...,6}{
 \forloop[2]{z}{\x}{\value{z} < 6}{
        \node[draw,circle,red, inner sep=1.5pt,fill] at ($\x*(Bone)+\value{z}*(Bthr)-(Bthr)$) {};
  }
}

\filldraw[conefill] ($(Bone)+(Btwo)$) -- ($(Bone)+(Btwo)+4*(Bthr)$) -- ($5*(Bone)+(Btwo)+4*(Bthr)$) -- cycle;
\filldraw[conefill] ($(Bone)+(Btwo)$) -- ($(Bone)+(Btwo)+4*(Bthr)$) -- ($(Bone)+5*(Btwo)+4*(Bthr)$) -- cycle;
\filldraw[conefill] ($(Bone)+(Btwo)$) -- ($5*(Bone)+(Btwo)+4*(Bthr)$) -- ($(Bone)+5*(Btwo)+4*(Bthr)$) -- cycle;
\end{tikzpicture}
\label{roots_fig}
\caption{All~roots}
\end{minipage}%
\begin{minipage}{.28\textwidth} 
\centering
\begin{tikzpicture}
    \pgftransformcm{0.4}{0}{0}{0.4}{\pgfpoint{0cm}{0cm}}
    \tikzstyle{conefill} = [fill=blue!20,fill opacity=0.7, draw = black!70]
    \tikzstyle{conefill2} = [fill=violet!40,fill opacity=0.7, draw = black!70]

    \coordinate (Btwo) at (1.4,0);
    \coordinate (Bthr) at (0,1.4);
    \coordinate (Bone) at (-1.2,-0.6);
    
    \coordinate (Or) at (0,0);
    \coordinate (YAxisMin) at ($-3*(Btwo)$);
    \coordinate (YAxisMax) at ($3*(Btwo)$);
    \coordinate (ZAxisMin) at ($-3*(Bthr)$);
    \coordinate (ZAxisMax) at ($7*(Bthr)$);
    \coordinate (XAxisMin) at ($-3*(Bone)$);
    \coordinate (XAxisMax) at ($5*(Bone)$);

    \draw [thick, black,-latex] (YAxisMin) -- (YAxisMax) node [below] {$y$};
    \draw [thick, black,-latex] (XAxisMin) -- (XAxisMax) node [below right] {$x$};
    \draw [-latex] ($-1*(Bone)-(Btwo)$) -- ($-1*(Bone)+(Btwo)$) -- ($(Bone)+(Btwo)$) -- ($(Bone)-(Btwo)$) -- cycle;
    \draw [thick, black,-latex] (ZAxisMin) -- (ZAxisMax) node [below right] {$z$};

\filldraw[conefill] ($(Bone)+(Btwo)$) -- ($(Bone)+(Btwo)+4*(Bthr)$) -- ($5*(Bone)+(Btwo)+4*(Bthr)$) -- cycle;
\filldraw[conefill] ($(Bone)+(Btwo)$) -- ($(Bone)+(Btwo)+4*(Bthr)$) -- ($(Bone)+5*(Btwo)+4*(Bthr)$) -- cycle;
\filldraw[conefill2] ($(Bone)+(Btwo)+(Bthr)$) -- ($4*(Bone)+(Btwo)+4*(Bthr)$) -- ($(Bone)+4*(Btwo)+4*(Bthr)$) -- cycle;
\filldraw[conefill] ($(Bone)+(Btwo)$) -- ($5*(Bone)+(Btwo)+4*(Bthr)$) -- ($(Bone)+5*(Btwo)+4*(Bthr)$) -- cycle;
\end{tikzpicture}
\caption{Type~I}
\end{minipage}

\end{figure}

Let us describe the roots in Example~\ref{ex_pyu1}. The grading group $K$ is isomorphic to $\ZZ^3$, and the grading can be given explicitly via
$$
\deg T_{01} = \begin{pmatrix}1 \\ 0 \\ 1\end{pmatrix}, \;
\deg T_{02} = \begin{pmatrix}-1 \\ 0 \\ 1\end{pmatrix}, \;
\deg T_{11} = \begin{pmatrix}0 \\ 1 \\ 1\end{pmatrix}, \;
\deg T_{12} = \begin{pmatrix}0 \\ -1 \\ 1\end{pmatrix}, \;
\deg T_{21} = \begin{pmatrix}0 \\ 0 \\ 1\end{pmatrix}.
$$
By applying Proposition~\ref{pr_roots}, one can find the set of roots of the quadric. Namely, it is the union of eight basic sets. 
Four <<corner>> basic sets of the form $E(T_{0c_0}, T_{1c_1})$ consist of all integer points in cones with vertices at $(\pm1, \pm 1, 0)$, one of them is shown in Figure~1. 
Four <<lateral>> basic sets of the form $E(T_{0c_0}, T_{21})$ and $E(T_{1c_1}, T_{21})$ consist of integer points with odd sum of coordinates in cones with vertices at $(\pm1, 0, 0)$ and $(0, \pm1, 0)$, one of them is shown in Figure~2. 

All roots in <<lateral>> basic sets except four <<layers>> belong to <<corner>> basic sets as well (see Figure~3). Thus, 
the set of all roots consists of four <<corner>> basic sets and four <<layers>> between them, it is shown in Figure~4. 

Note that every root belongs to at most three basic sets, this agrees with Proposition~\ref{pr_3roots}. The degrees of derivations of Type~I are the integer points in four 2-dimensional cones with vertices at $(\pm1, \pm1, 1)$, one of them is shown in Figure~5. Note that every point in this set belongs to three basic sets, but there are other points that belong to three basic sets. This agrees with Remark~\ref{rem_TI}. 
\end{example}

Let us give one more application of Theorem~\ref{theor}. It is known that for a commutative $\KK$-domain $R$ and locally nilpotent derivations $\delta_1, \delta_2$ on $R$ the condition $\Ker \delta_1 = \Ker \delta_2$ implies $h_1 \delta_1 = h_2 \delta_2$ for some $h_1, h_2 \in R$; see~\cite[Principle 12]{Fr2006}. For trinomial algebras we can prove a more precise statement. 

\begin{proposition}
Let $\R$ be a trinomial algebra and $\delta_1, \delta_2$ be finely homogeneous locally nilpotent derivations of $\R$ with $\Ker \delta_1 = \Ker \delta_2$. 
Then $\delta_i = h_i \delta$ for some locally nilpotent derivation $\delta$ of $\R$ and $h_1, h_2 \in \R$. 
\end{proposition}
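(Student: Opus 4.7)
The plan is to apply Theorem~\ref{theor} to each derivation and write $\delta_i = h_i \delta_{C_i, \beta_i}$ with $h_i$ a nonzero homogeneous element of $\Ker \delta_{C_i, \beta_i}$ (if either $\delta_i = 0$ the statement is trivial). Since $\R$ is a domain, $\Ker(h\delta) = \Ker \delta$ for any locally nilpotent $\delta$ and any nonzero $h \in \Ker \delta$; applied here this yields $\Ker \delta_{C_1, \beta_1} = \Ker \delta_{C_2, \beta_2}$. The goal then reduces to showing that the two elementary derivations $\delta_{C_1, \beta_1}$ and $\delta_{C_2, \beta_2}$ differ only by a scalar in $\KK^*$, because then the common factor $\delta := \delta_{C_1, \beta_1}$ works after absorbing that scalar into one of the $h_i$.

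To match the data, the first observation is that membership of each variable $T_{ij}$ in the kernel is determined by the kernel itself. Reading off Construction~\ref{dCb_constr}, the set of non-kernel variables of $\delta_{C, \beta}$ is $\{T_{0c_0}, T_{1c_1}, T_{2c_2}\}$ in case~(i) and $\{T_{ic_i} : i \ne i_0\}$ in case~(ii); the cardinalities ($3$ versus $2$) distinguish the two cases, and in both cases this set determines $C$ and (in case~(ii)) the index $i_0$ (the value of $c_{i_0}$ is irrelevant, as it does not enter the formula). Hence the two derivations $\delta_{C_i, \beta_i}$ are of the same type with the same $C$ and (if applicable) the same $i_0$. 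Write $\beta_k = (\beta_{k,0}, \beta_{k,1}, \beta_{k,2})$. Case~(ii) is immediate: both $\beta_1, \beta_2$ have a fixed zero entry at $i_0$ and sum to zero, so each is determined up to a scalar. For case~(i), by Proposition~\ref{prop_ker} the element $\beta_{1,1} T_0^{l_0} - \beta_{1,0} T_1^{l_1}$ lies in $\Ker \delta_{C, \beta_1} = \Ker \delta_{C, \beta_2}$; applying $\delta_{C, \beta_2}$ to it, exactly as in the proof of Proposition~\ref{prop_ker}, produces $(\beta_{1,1}\beta_{2,0} - \beta_{1,0}\beta_{2,1}) \prod_i \pa T_i^{l_i}/\pa T_{ic_i}$, so since this product is a nonzero monomial in the domain $\R$, the scalar coefficient must vanish, forcing $\beta_1 \propto \beta_2$.

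The main obstacle is the Type~I step of the $\beta$ comparison: one has to verify that $\prod_i \pa T_i^{l_i}/\pa T_{ic_i}$ is a nonzero element of $\R$ so that vanishing of the whole expression forces vanishing of the scalar coefficient. This follows because $\R$ is a domain and each factor $\pa T_i^{l_i}/\pa T_{ic_i} = l_{ic_i} T_{ic_i}^{l_{ic_i} - 1} \prod_{j \ne c_i} T_{ij}^{l_{ij}}$ is a nonzero monomial, and monomials remain nonzero modulo the irreducible $\g$. After this, linearity of Construction~\ref{dCb_constr} in $\beta$ gives $\delta_{C_1, \beta_1} = \lambda \delta_{C_2, \beta_2}$ for some $\lambda \in \KK^*$, and taking $\delta := \delta_{C_2, \beta_2}$ yields $\delta_1 = (\lambda h_1) \delta$ and $\delta_2 = h_2 \delta$, completing the proof.
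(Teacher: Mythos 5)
Your proposal is correct and follows essentially the same route as the paper: reduce via Theorem~\ref{theor} to showing that $\dCb$ is determined by its kernel up to a scalar, recover $C$ and the type from the set of non-kernel variables, and then pin down $\beta$ up to proportionality. The only (minor) divergence is in the last step for Type~I, where you establish $\beta_1 \propto \beta_2$ by directly applying $\delta_{C,\beta_2}$ to the kernel element $\beta_{1,1}T_0^{l_0} - \beta_{1,0}T_1^{l_1}$ and using that $\R$ is a domain, whereas the paper argues that a non-proportional pair of such binomials in the kernel would force $T_0^{l_0}, T_1^{l_1}$, and hence (by factorial closedness) all $T_{ij}$, into $\Ker\dCb$ --- both arguments are valid and rest on the same computation from Proposition~\ref{prop_ker}.
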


\begin{proof}
By Theorem~\ref{theor}, $\delta_1$ and $\delta_2$ are elementary, that is they are of the form $h \dCb$ for some $h \in \R$. Since $h$ does not affect the kernel of $h \dCb$, it is sufficient to prove that $\dCb$ is defined by $ \Ker \dCb$ up to a constant from $\KK$. 

First note that the set of variables $T_{ij}$ with $\dCb(T_{ij}) = 0$ uniquely determines the sequence~$C$ and the type of the derivation, see Construction~\ref{dCb_constr}. 

Now let us prove that $\beta = (\beta_0, \beta_1, \beta_2)$ is defined by $\Ker \dCb$ up to a constant. 
According to Proposition~\ref{prop_ker}, we have $\beta_1 T_0^{l_0} - \beta_0 T_1^{l_1} \in \Ker \dCb$. 
Let any other $\alpha_1 T_0^{l_0} - \alpha_0 T_1^{l_1}$ belong to~$\Ker \dCb$. Suppose that the pairs $(\alpha_0, \alpha_1)$ and $(\beta_0, \beta_1)$ are non-proportional. 
Then $T_0^{l_0}$, $T_1^{l_1}$ are linear combinations of the above elements from $\Ker \dCb$ and hence belong to $\Ker \dCb$. 
Since $\Ker \dCb$ is factorially closed, this implies that $T_{ij} \in \Ker \dCb$ for any $i, j$, a contradiction. 
Thus $\alpha_1 T_0^{l_0} - \alpha_0 T_1^{l_1}$ belongs to $\Ker \dCb$ if and only if $(\alpha_0, \alpha_1)$ is proportional to $(\beta_0, \beta_1)$. Applying $\beta_2 = -\beta_0-\beta_1$ yields that $(\beta_0, \beta_1, \beta_2)$ is determined by $\Ker \dCb$ up to a constant.  
\end{proof}

\end{document}